\newcommand{\rrVert}{\Vert}
\newcommand{\llVert}{\Vert}
\renewcommand{\mid}{|}
\newcommand{\binom}[2]{{#1 \choose #2}}
\newcommand{\binomm}[2]{\pmatrix{#1\cr #2}}
\newtheorem{theorem}{Theorem}[section]
\newtheorem{lemma}[theorem]{Lemma}
\newtheorem{claim}[theorem]{Claim}
\newtheorem{proposition}[theorem]{Proposition}
\newtheorem{corollary}[theorem]{Corollary}
\newcommand{\eqref}[1]{(\ref{#1})}
\def\E{{\mathbb E}}
\def\R{{\mathbb R}}
\def\P{{\mathbb P}}
\newcommand{\cC}{{\mathcal C}}
\newcommand{\cF}{{\mathcal F}}
\newcommand{\cG}{{\mathcal G}}
\newcommand{\cI}{{\mathcal I}}
\newcommand{\cP}{{\mathcal P}}
\newcommand{\cS}{{\mathcal S}}
\newcommand{\cT}{{\mathcal T}}
\newcommand{\cW}{{\mathcal W}}
\newcommand{\Dim}{D }
\newcommand{\tv}{{\mathrm{TV}}}
\def\Var{\operatorname{Var}}
\def\eps{\varepsilon}
\def\Cov{\operatorname{Cov}}
\newcommand{\Po}{\operatorname{Po}}
\newcommand{\Bin}{\operatorname{Bin}}
\newcommand{\dist}{\operatorname{dist}}
\newcommand{\aut}{\operatorname{aut}}
\newcommand{\NSTAB}{\mbox{\textsc{Stab}}}
\newcommand{\NS}{\mbox{\textsc{Sens}}}
\newcommand{\SNSn}{$\mbox{\textsc{StrSens}}_{0}$}
\newcommand{\SNSv}{$\mbox{\textsc{StrSens}}_{1}$}
\newcommand{\smP}{\mathrm{P}}
\newcommand{\One}{\mathbh{1}}
\newcommand{\Inf}{\mathbf{I}}
\begin{document}
\begin{frontmatter}

\title{Strong noise sensitivity and random graphs}
\runtitle{Strong noise sensitivity and random graphs}

\begin{aug}
\author[A]{\fnms{Eyal}~\snm{Lubetzky}\ead[label=e1]{eyal@courant.nyu.edu}}
\and
\author[B]{\fnms{Jeffrey E.}~\snm{Steif}\corref{}\ead[label=e2]{steif@chalmers.se}\thanksref{T1}}
\runauthor{E. Lubetzky and J. E. Steif}
\affiliation{Microsoft Research, Chalmers University of Technology and
G\"{o}teborg University}
\address[A]{Courant Institute\\
New York University\\
251 Mercer Street\\
New York, New York 10012\\
USA\\
\printead{e1}}
\address[B]{Mathematical Sciences\\
Chalmers University of Technology\\
G\"oteborg SE-41296\\
Sweden\\
and\\
Mathematical Sciences \\
G\"{o}teborg University \\
G\"{o}teborg SE-41296\\
Sweden\\
\printead{e2}}
\end{aug}
\thankstext{T1}{Supported by the Swedish Research Council and the Knut and Alice Wallenberg Foundation.}

\received{\smonth{8} \syear{2013}}
\revised{\smonth{5} \syear{2014}}

%
\begin{abstract}
The noise sensitivity of a Boolean function describes its likelihood
to flip under small
perturbations of its input. Introduced in the seminal work of
Benjamini, Kalai and Schramm
[\textit{Inst. Hautes \'Etudes Sci. Publ. Math.} \textbf{90} (1999) 5--43],
it was there shown to be governed by the first level of Fourier
coefficients
in the central case of monotone functions at a constant critical
probability $p_c$.

Here we study noise sensitivity and a natural stronger version of it,
addressing the effect
of noise given a specific witness in the original input.
Our main context is the Erd\H{o}s--R\'enyi random graph, where already
the property of
containing a given graph is sufficiently rich to separate these
notions. In particular,
our analysis implies (strong) noise sensitivity in settings where the
BKS criterion
involving the first Fourier level does not apply, for example, when
$p_c \to0$ polynomially fast in the number of variables.
\end{abstract}

%
\begin{keyword}[class=AMS]
\kwd{82C43}
\kwd{82B43}
\kwd{60K35}
\end{keyword}
\begin{keyword}
\kwd{Noise sensitivity of Boolean functions}
\kwd{random graphs}
\end{keyword}
\end{frontmatter}

\setcounter{footnote}{1}
\section{Introduction}\label{secIntro}
The concept of noise sensitivity, introduced by Benjamini, Kalai and
Schramm~\cite{BKS}, captures the notion that the value of a Boolean
function of many i.i.d. variables would change under small
perturbations of its input. Roughly put, it corresponds to the case
where a small perturbation of the input variables via i.i.d. noise
suffices to make the new value of the function asymptotically
independent of its original value.

Formally, consider a sequence of functions $f_n\dvtx\Omega_n\to\{
0,1\}$
paired with a sequence of probabilities $p_n$, where each domain
$\Omega
_n = \{0,1\}^{\Lambda_n}$ is a product space of $\operatorname{Bernoulli}(p_n)$
variables, and the sets $\Lambda_n$ are finite and increasing with $n$.
Further assume that the sequence $(p_n)$ is \emph{nondegenerate} in
the sense that $\P(f_n=1)$ is uniformly bounded away from $0$ and $1$.
Given $\omega\in\Omega_n$ and some $\varepsilon\in(0,1)$, let
$\omega
^\varepsilon$ denote the result of
resampling the $\operatorname{Bernoulli}(p_n)$ variable $\omega_x$ independently with
probability $\varepsilon$ for each $x\in\Lambda_n$.
The sequence $(f_n)$ is said to be \emph{noise sensitive} ($\NS$)
w.r.t. $p_n$ if for any $\varepsilon>0$,
%
%
\begin{equation}
\label{eqnNS} \lim_{n\rightarrow\infty} \P\bigl(f_{n}\bigl(
\omega^\varepsilon\bigr)=1\mid f_{n}(\omega)=1 \bigr) - \P(
f_{n}=1 )=0,
\end{equation}
or equivalently [recall that $(f_n)$ is nondegenerate], $\Cov
(f_{n}(\omega),f_{n}(\omega^{\varepsilon}) )\to0$.
When a function $(f_n)$ is $\NS$ it is natural to further discuss
\emph{quantitative noise sensitivity}; that is, how fast can
$\varepsilon\to0$
with $n$ such that~\eqref{eqnNS} still holds?

In the setting where $p_n\equiv1/2$ and the functions $f_n$ are \emph
{monotone} w.r.t. the natural partial order on the hypercube $\Omega_n$
(as is notably the case for critical 2\Dim percolation), a beautiful
argument of Benjamini, Kalai and Schramm~\cite{BKS} gave a criterion
for noise sensitivity in terms of the first level of Fourier
coefficients of $f_n$. Namely, $(f_n)$ is noise sensitive if and only
if $\lim_{n\to\infty}\sum_{x\in\Lambda_n}
\hat{f}_n(x)^2 = 0$, where $\hat{f}_n(x)$ is the Fourier coefficient
corresponding to the singleton $\{x\}$, and is also one-half the
probability that $x$ is \emph{pivotal}; that is, flipping its value
would flip the value of $f_n$.
For more on noise sensitivity in this case, see~\cite{GS} and the
references therein.
Unfortunately, this criterion becomes invalid when $p_n\to0$
(e.g., formal definitions postponed, the indicator of a random graph
being triangle-free satisfies the above condition, and yet it is \emph
{not} noise sensitive; see~\cite{BKS}, Section~6.4),
and determining noise sensitivity without it can prove to be a
challenging task already for fairly simple monotone functions enjoying
many symmetries.

\subsection{Strong noise sensitivity}
Going back to~\eqref{eqnNS}, this is known (see~Section~\ref
{NSbackgroundsecFourier}) to be equivalent to having the average of
$\vert\P(f_{n}(\omega^\varepsilon)=1\mid\omega) - \P( f_{n}=1
)\vert$ over $\{\omega\dvtx f_n(\omega)=1\}$ tend to $0$ as $n\to
\infty
$. That is, if $(f_n)$ is noise sensitive, then most inputs $\omega\in
\Omega_n$ with $f_n(\omega)=1$ are such that conditioning on $\omega$
will not give any substantial information on the probability that
$f_n(\omega^\varepsilon)=1$.
When dealing with monotone functions, however, it is in many cases more
natural and useful to condition on a \emph{witness} for $f_n(\omega)=1$
(e.g., a particular crossing in 2\Dim percolation) instead of
the entire configuration $\omega$.

\begin{definition} \label{witness}
A 1-\emph{witness} for a monotone function $f\dvtx\{0,1\}^\Lambda\to
\{0,1\}
$ is a minimal subset $W\subset\Lambda$ such that $\omega_W\equiv1$
implies $f(\omega)=1$.
\end{definition}

Let $\cW_1=\cW_1(f)$ denote the set of 1-witnesses of a monotone
Boolean function~$f$, and let $\cW_0=\cW_0(f)$ denote its analogously
defined 0-witnesses.

Perhaps surprisingly, it can be the case that $(f_n)$ is noise
sensitive and yet the probability that $f_n(\omega^\varepsilon)=1$
substantially increases when we condition on \emph{any particular}
1-witness in $\omega$. This motivates the following definition.

%
\begin{definition} \label{1-strongNS}
A sequence $(f_{n})$ of monotone increasing Boolean functions
is said to be 1-\emph{strongly noise sensitive} (\SNSv) if for any
$\varepsilon>0$,
%
%
\begin{equation}
\label{eqnSNS} \lim_{n\to\infty} \max_{W\in\cW_1}\P
\bigl(f_n\bigl(\omega^{\varepsilon}\bigr)=1\mid
\omega_W\equiv1\bigr) -\P(f_n=1) = 0.
\end{equation}
The notion of 0-\emph{strong noise sensitivity} (\SNSn) is defined
analogously.
[Note that a sequence of increasing functions $(f_n)$ is \SNSn\ if and
only if its complement $(\overline{f_n})$ is \SNSv, where $\overline
{f_n}(\omega) = \overline{f_n(
\bar{\omega})}$ with $\bar{x}=1-x$.]
\end{definition}

As we will later see (and as suggested by its name), the notion of
strong noise sensitivity, which addresses the subtler effect
of conditioning on any \emph{particular} witness [cf.~\eqref{eqnNS}
vs.~\eqref{eqnSNS}],
indeed implies (even when $\varepsilon\to0$) the standard noise
sensitivity but not vice versa.

We now demonstrate this concept through two examples of monotone noise
sensitive functions discussed by Benjamini, Kalai and Schramm in~\cite{BKS},
both of which trace back to Ben-Or and Linial in the related work~\cite{BL}.
\begin{longlist}[(ii)]
\item[(i) \emph{Tribes}.] Partition $\Lambda_n=\{x_1,\ldots,x_n\}$ into
blocks of
$\log_2 n- \log_2 \log_2 n$ variables, let $p_n\equiv1/2$ and set $f_n$
to be $1$ if there is an all-1 block.

It is known~\cite{BKS}, Section~6.1, that this function is
nondegenerate and $\NS$.
\mbox{A~1-}witness $W$ in $\omega$ is a full block, which the noise will
destroy with probability approaching 1,
and the probability of encountering another in $\omega^\varepsilon$ should
be asymptotically $\P(f_n=1)$.
Indeed, tribes is \SNSv.

\item[(ii) \emph{Recursive} 3\emph{-majority}.] Index $n=3^k$
variables by the
leaves of a ternary tree, and iteratively set the value of each node to
be the majority of its children. Take $p_n\equiv1/2$, and define $f_n$
to be the value at the root.

Clearly nondegenerate, this function is known~\cite{BKS}, Section~6.2,
to be $\NS$,
that is, $\P(f_n(\omega^\varepsilon)=1\mid f_n(\omega)=1 )\to1/2$
as $n\to
\infty$.
A 1-witness $W$ is a set of $2^k$ leaves (positioned in the obvious way
to force the majority). It is then easy to verify that
$\P(f_n(\omega^\varepsilon)=1 \mid\omega_W\equiv1 ) =
1-\varepsilon/2$, and
therefore
this function is \emph{not} \SNSv\ (nor \SNSn\ by symmetry).
\end{longlist}
It is important to emphasize the potentially different behaviors of
0-witnesses and 1-witnesses w.r.t. strong noise sensitivity, versus
standard noise sensitivity which is closed under taking complements.
Indeed, by a general principle, the tribes function, mentioned above as
being \SNSv, is \emph{not} \SNSn\ [conditioning on a particular
0-witness in $\omega$ does affect $f_n(\omega^\varepsilon)$ in the limit].

The above examples all featured $p_n\equiv1/2$. Indeed, as noted
in~\cite{BKS}, Section~6.4,

\begin{quotation}
``\emph{When $p$ tends to zero with $n$}, \emph{new phenomena occur.
Consider},
\emph{for example}, \emph{random graphs on $n$
vertices with edge probability} ${p = n^{- a}}\ldots$''
\end{quotation}
Many key features of the Erd\H{o}s--R\'enyi random graph are
nondegenerate at such $p$, and yet the BKS criterion for $\NS$ is then
no longer applicable.


\subsection{Properties of random graphs}
The Erd\H{o}s--R\'{e}nyi random graph, $\cG(n,\break p)$, is a probability
distribution over graphs on $n$ labeled vertices, where each undirected
edge appears independently with probability $p=p(n)$.
A monotone increasing graph property is a collection of graphs closed
under isomorphism and the addition of edges, and we will often identify
it with its indicator function [a~monotone Boolean function on the
$\binom{n}2$ edge variables].

As a first example, consider $\cG(n,p)$ at its famous critical window
centered at $p=1/n$, where the longest cycle is typically of order
$n^{1/3}$; see, for example,~\cite{JLR}.

%
\begin{theorem}\label{thmcube-root-cycle}
Fix $0<a<b$, and let $f_n$ be the property that the critical random
graph $\cG(n,1/n)$ contains a cycle of length $\ell\in(a n^{1/3}, b
n^{1/3})$. Then $(f_n)$ is nondegenerate and noise sensitive, and
furthermore, it is \SNSv.

Moreover, the analogue of this conclusion for quantitative noise
sensitivity holds if and only if the noise parameter $\varepsilon
=\varepsilon
(n)$ satisfies $\varepsilon\gg n^{-1/3}$.
\end{theorem}

Theorem~\ref{thmcube-root-cycle} in fact holds throughout the critical
window $p=\frac{1\pm\xi}n $ with $\xi=O(n^{-1/3})$, around which the
longest cycle grows from constant to linear (e.g., taking $\xi^3 n\to
\infty$ still with $\xi=o(1)$, the\vspace*{1pt} maximum length of a
cycle is $\Theta
_\smP(1/\xi)$ at $p=\frac{1-\xi}n$ and $\Theta_\smP(\xi^2 n)$ at
$p=\frac{1+\xi}n$; see~\cite{JLR}, Theorems 5.17, 5.18).

Revisiting the quantitative conclusion of Theorem~\ref
{thmcube-root-cycle} now highlights an interesting phenomenon, where
the $\varepsilon\gg n^{-1/3}$ threshold for noise sensitivity coincides
with the boundary of the critical window ($p=\frac{1\pm\xi}n$ for
$\xi
\gg n^{-1/3}$). This phenomenon is best explained through the following
equivalent process:
\begin{itemize}
\item Let $\omega$ be a uniform set of $N\sim\Bin(\binom{n}2,p)$
edges.\vspace*{1pt}
\item Obtain $\bar{\omega}$ by deleting a uniform set of $ \Bin
(N,\varepsilon(1-p))$ edges from $\omega$.
\item Add a uniform set of $\Bin(\binom{n}2-N,\varepsilon p)$ edges
missing from $\omega$ to get $\omega^\varepsilon$.
\end{itemize}
As the edge probability in $\bar{\omega}$ is $p(1-\varepsilon
)+\varepsilon
p^2$, on a heuristic level we have:
\begin{longlist}[(a)]
\item[(a)] If $\varepsilon\lesssim n^{-1/3}$, then $\bar{\omega}$
remains in
the critical window, where $(f_n)$ is nondegenerate, so $f_n(\omega
),f_n(\bar{\omega})$ [thus $f_n(\omega),f_n(\omega^\varepsilon)$] should
be correlated.
\item[(b)] If $\varepsilon\gg n^{-1/3}$, then $\bar{\omega}$ is subcritical
whence $f_n(\bar{\omega})$ is degenerate, effectively decorrelating
$f_n(\bar{\omega})$ from $f_n(\omega)$ [thus also $f_n(\omega
),f_n(\omega^\varepsilon)$] yielding $\NS$.
\end{longlist}
Although plausible, it is unclear that in general the degeneracy of
$f_n(\bar{\omega})$ will indeed result in the decorrelation of
$f_n(\omega)$ and $f_n(\omega^\varepsilon)$.

Intuitively, we expect a random graph property to be noise sensitive
when it has no bounded-size witnesses (thus none will survive the noise
in fact), and distinct witnesses are essentially independent (so
surviving fragments of a witness will have negligible impact),
as is the case in the theorem above.

However, for various important graph properties the witnesses happen to
be highly correlated, foiling this intuition. For instance, containing
a Hamilton cycle is nondegenerate at $p\sim\frac{\log n}n$, yet the
expected number of witnesses becomes exponentially large in $n$ already
at $p=O( 1/n)$, and similarly for perfect matchings.
Nevertheless, both are in fact noise sensitive:

%
\begin{theorem}\label{thmmin-degree}
Let $f_n$ be the property that the minimum degree of $\cG(n,p)$ is at
least $k$ for some fixed $k\geq1$, and suppose $p=p(n)$ is such that
$(f_n)$ is nondegenerate.
Then $(f_n)$ is noise sensitive, and moreover, it is \SNSn.

As a result, the following properties of $\cG(n,p)$ are noise sensitive:
\begin{longlist}[(iii)]
\item containing a Hamilton cycle,
\item containing a perfect matching (in general, an $r$-factor\footnote
{An $r$-factor of a graph is a spanning $r$-regular subgraph.} for $r$ fixed),
\item connectivity (in general, $k$-vertex and $k$-edge connectivity
for $k$ fixed),
\item having an isoperimetric constant\footnote{The isoperimetric
constant of a graph is the minimum of $\frac{e(S,S^c)}{\vert
S\vert\wedge\vert S^c\vert}$ over all subsets $S$ of the
vertices, where $e(S,S^c)$ is the number of edges between $S$ and its
complement.} of at least $\gamma$ for some fixed $\gamma>0$.
\end{longlist}
Furthermore, each of these is quantitatively noise sensitive if and
only if $\varepsilon\gg\frac{1}{\log n}$.
\end{theorem}

It is worthwhile noting that not even the (nonstrong) noise
sensitivity in Theorems~\ref{thmcube-root-cycle} or~\ref{thmmin-degree} can be obtained from the best-known generalizations of
the BKS criterion for varying $p$ (see~\cite{KK}), as these all require
$1/p=n^{o(1)}$.

We turn our attention to the well-studied family of properties of the
form ``$\cG(n,p)$ contains a copy of a given graph $H_n$.'' Obviously,
if the size of $H_n$ is uniformly bounded then this property is \emph
{not} noise sensitive, since a copy of $H_n$ will survive the noise
with positive probability (as noted in~\cite{BKS}, Section~6.4, it is
\emph{noise stable},
a notion basically the opposite of being noise sensitive).
Note that having the number of edges in $H_n$ grow with $n$ is a
necessary but not sufficient condition for noise sensitivity (e.g.,
take $\log n$ disjoint edges).

The case where $H_n$ is a clique concerns the maximum clique size in
$\cG(n,p)$. It is well known (see, e.g.,~\cite{AS}) that at $p=1/2$
this concentrates on a single point $k_n\sim2 \log_2 n$ for most values
of $n$, while for exceptional values of $n$ it is either $k_n$ or
$k_n+1$ with high probability. In the latter case, one can ask whether
the property that $k_n$ is the maximum clique size is noise sensitive.
Indeed it is, as implied by the BKS criterion; see~Section~\ref
{secprelimcliques}. However, one would expect there to be a direct
proof of this fact that does not employ the machinery of Fourier
analysis and hyper-contractive estimates.

Here we provide a direct proof of strong noise sensitivity for this
property. 

%
\begin{theorem}
\label{thmclique}
Let $f_n$ be the property that $\cG(n,p)$ has a clique of size $k_n$
for $k_n=n^{o(1)}$ such that $k_n\to\infty$ with $n$, and suppose
$p=p(n)$ is such that $(f_n)$ is nondegenerate. Then $(f_n)$ is noise
sensitive. Moreover, it is \SNSv.
\end{theorem}

Consider the above theorem for $1 \ll k_n \lesssim\log n$. When $H_n$
is a clique of size $k_n$, containing $H_n$ in $\cG(n,p)$ is $\NS$.
However, if $H_n$ consists of $k_n$ disjoint edges for the same
sequence $k_n$, then the property is noise stable (essentially as a
majority function).
In light of these two opposite behaviors, one wishes to understand
which features of the given graph $H_n$ dictate $\NS$.

While determining noise sensitivity for graphs $H_n$ whose size grows
rapidly with $n$ can be delicate, the picture is fairly well understood
when the graph sizes are at most a certain poly-log of $n$.
In that case, it turns out that a single feature of $H_n$---being \emph
{strictly balanced}---governs noise sensitivity.
A graph is \emph{balanced} if its average degree is at least that of
any of its proper subgraphs, and
it is \emph{strictly balanced} if these inequalities are all strict
(e.g., a clique is strictly balanced whereas a collection of disjoint
edges is balanced).


%
\begin{theorem}\label{thm-strictly-balanced}
Let $H_n$ be a sequence of graphs, and let $f_n$ be the property that
the random graph $\cG(n,p)$ contains a copy of $H_n$. The following holds:
\begin{longlist}
\item[(1)] If $H_n$ is strictly balanced
with $1 \ll\ell_n \leq(\frac{\log n}{\log\log n} )^{1/2}$ edges,
then $(f_n)$ is noise sensitive, and furthermore, it is \SNSv.
\item[(2)] There exists a sequence of
strictly balanced graphs $H_n$ with $\ell_n \asymp\log n$ edges for
which $(f_n)$ is not noise sensitive.
\end{longlist}
\end{theorem}

We stress that the assumption that $H_n$ is strictly balanced is
necessary in the sense that without it, one could take $H_n$ to be
$\ell
_n$ disjoint copies of any fixed strictly balanced graph (e.g., a
clique or a tree) for any $ \ell_n \ll\sqrt{n}$, whence containing
$H_n$ is not $\NS$ (in fact, it is noise stable). However, not that
having $H_n$ be strictly balanced is a necessary condition for $\NS$;
for example, we will see that containing a disjoint union of two
cliques is \SNSv.

The last two theorems will be obtained as a consequence of a general
tool (Proposition~\ref{proppoisson-WD}) which deduces \SNSv\ from an
appropriate Poisson approximation of the number of copies of $H_n$ in $G$.

We note that each of the properties shown in Theorems~\ref
{thmcube-root-cycle}--\ref{thm-strictly-balanced} to be \SNSv\ is
\emph{not} \SNSn,
and the properties that were shown to be \SNSn\ are \emph{not} \SNSv.
Indeed, a general principle (Lemma~\ref{lemboundedwitness}) will yield
that if we let $X_n$ denote the number of 1-witnesses $W$ for which
$\omega_W\equiv1$, then having $\E[X_n]=O(1)$ precludes \SNSn\ (and
similarly for 0-witnesses). At the same time, there can be monotone
Boolean functions that are both \SNSn\ and \SNSv, as we demonstrate
in Section~\ref{sec0-vs-1}.

\subsection{Organization}
The rest of the paper is outlined as follows.
In Section~\ref{secprelim}, we provide prerequisites on noise
sensitivity. Section~\ref{secwit-to-noise} demonstrates the use of
strong noise sensitivity toward establishing noise sensitivity,
including the proof of Theorems~\ref{thmcube-root-cycle} and~\ref
{thmmin-degree}. Section~\ref{secwit-dist} looks into the
dependencies between witnesses for a sufficient condition for strong
noise sensitivity. This condition is then applied in the context of
containing a given graph in $\cG(n,p)$ and in particular toward the
proofs of Theorems~\ref{thmclique} and~\ref{thm-strictly-balanced}.
Finally, Section~\ref{sec0-vs-1} compares the 0-strong and \mbox{1-}strong
noise sensitivity of a function, as well as the validity of these
properties under varying levels of noise. 

\section{Preliminaries}\label{secprelim}

This section includes background on noise sensitivity, both for constant
$p$ and when the probabilities $p$ are allowed to vary with $n$; see,
for example,~\cite{KK} for additional information on this topic.
We first set some standard notation.

\subsection{Notation}
Throughout the paper, a sequence of events $A_n$ is said to hold with
high probability (w.h.p.) if $\P(A_n)\to1$ as $n\to\infty$. We use the
notation $f=O_\smP(g)$ to denote that the ratio $f/g$ is bounded in
probability, and the analogous $f=\Theta_\smP(g)$ to denote that
$f=O_\smP(g)$ and $g=O_\smP(f)$.
At times we use $f \ll g$ and $f \lesssim g$ to abbreviate $f = o(g)$
and $f=O(g)$, respectively, as well as the converse form of these.
We will often omit the subscript $n$ from the probabilities $p_n$ under
consideration in this paper (though these will typically tend to 0 as
$n\to\infty$) for simplicity.

\subsection{Influences and the pivotal set}\label{NSbackgroundsecFourier}
The notion of influence, defined next, is fundamental in the study of
noise sensitivity of functions.

\begin{definition}
Given a Boolean function $f$ from $\Omega= \{0,1\}^{\Lambda}$ into $\{
0,1\}$, $p\in(0,1)$
and $i\in\Lambda$, the \emph{influence of $i$ with respect to $p$} is
defined to be
%
%
\begin{equation}
\label{eq-def-influence} \Inf_i(f) = \P\bigl(f(\omega)\neq f\bigl(
\omega^i\bigr)\bigr),
\end{equation}
where $\omega^i$ is $\omega$ flipped in the $i$th coordinate.
\end{definition}

(As usual, the above definition implicitly depends on $p$ through $\P$.)
The following theorem of~\cite{BKS} is one of the central results on
noise sensitivity.

%
\begin{theorem}[(\cite{BKS})]\label{thNSmainresult}
Let $p_n\equiv p$ for some fixed $0<p<1$.
If
%
%
\begin{equation}
\label{eSquaredInfluencesTo0} \lim_{n\to\infty}\sum_i
\Inf_i(f_n)^2 = 0
\end{equation}
for a sequence of Boolean functions $(f_n)$, then $(f_n)$ is $\NS$.
\end{theorem}

As we will see below, for monotone functions and constant $p$
the converse is also true, while what occurs when $p_n\to0$ is more subtle.

Consider the random set of \emph{pivotal} variables defined as
\[
\cP(\omega):=\cP_f(\omega):= \bigl\{i\in\Lambda\dvtx f(\omega
)\neq f
\bigl(\omega^i\bigr) \bigr\}.
\]
[Notice $\P(i\in\cP)=\Inf_i$.]
The following easy lemma will be used in this paper.

%
\begin{lemma} \label{lemmaPivotalFormula}
Every monotone Boolean function $f$ satisfies
\[
\E\bigl[\vert\cP\vert\mid f=1\bigr]=\frac{p}{\P(f=1)}\E\vert
\cP
\vert.
\]
\end{lemma}

\begin{pf}
Note that $\{f(\omega)\neq f(\omega^i)\}$ and $\{\omega_i=1\}$ are
independent, so the left-hand side of the desired equality is easily
seen to be equal to
\[
\sum_i \P\bigl(f(\omega)\neq f\bigl(
\omega^i\bigr)\mid f=1\bigr)= 
\sum
_i \frac{\P(f(\omega)\neq f(\omega^i), \omega_i=1)}{\P(f=1)}=
\frac
{p}{\P(f=1)}\E\vert\cP\vert,
\]
where the first equality uses monotonicity, and the second
equality uses the earlier stated independence.
\end{pf}

\begin{remark*}
The above also holds for nonmonotone functions when $p=1/2$.
\end{remark*}

We now indicate that the equivalence holding for monotone functions and
constant $p$ between
$\sum_i \Inf_i(f_n)^2 = o(1)$, and $\NS$ in fact fails for varying $p$
in either direction.
Let $f_n$ be the indicator function of a random graph containing a copy
of~$K_4$ with $p=n^{-2/3}$.
Clearly $\E[\vert\cP\vert\mid f=1]\le6$ which\vspace*{1pt} by
Lemma~\ref{lemmaPivotalFormula} implies that
$\E\vert\cP\vert= O(n^{2/3})$. By symmetry, this yields $I_i =
O(n^{-4/3})$ for each $i$,
which easily yields~\eqref{eSquaredInfluencesTo0}, and yet this
sequence is clearly stable.
On the other hand, if $f_n$ is the indicator function of a random graph
with $p=\frac{\log n}{n}$ having
minimal degree 1, then $\{f_n\}$ is $\NS$; see Theorem~\ref
{thmmin-degree}. However,
it is easy to verify that $\E[\vert\cP\vert\mid f=0]\gtrsim n$, which
by Lemma~\ref{lemmaPivotalFormula} yields $\E\vert\cP\vert
\gtrsim n$ and so $\sum_i \Inf_i(f_n)^2 \gtrsim1$.

We will see in the next subsection that asking about a possible
equivalence of
$\sum_i \Inf_i(f_n)^2 = o(1)$ and $\NS$ is in fact not really the right
question: instead one should ask about
a possible equivalence of $p\sum_i \Inf_i(f_n)^2 = o(1)$ and $\NS$.

\subsection{Fourier analysis}\label{Fourierbackground}
Fourier analysis is usually a crucial tool
in studying noise sensitivity. We give a quick presentation of this.
From it, one readily sees
some of the basic properties of noise sensitivity.

For a set $\Lambda$, $\omega\in\{0,1\}^{\Lambda}$ and $i\in\Lambda$,
we define
\[
\chi_i(\omega)=\cases{ \displaystyle\sqrt{(1-p)/p}, &\quad if $
\omega_i=1$,
\cr
\displaystyle-\sqrt{p/(1-p)}, &\quad if $
\omega_i=0$.}
\]
Furthermore, for $S\subseteq\Lambda$, let $\chi_S(\omega):=\prod
_{i\in S}\chi_i(\omega)$.
(In particular, $\chi_\varnothing$ is the constant function 1.)
The set $\{\chi_S\}_{S\subseteq\Lambda}$ forms an orthonormal basis
for the set
of functions $f\dvtx\{0,1\}^{\Lambda}\mapsto\R$ when the latter is
equipped with the inner product
$ \langle f,g\rangle:=\E[fg] $
(recall there is always an implicit $p$ when we write $\P$ or
$\E$). We can therefore expand such functions
$
f(\omega)=\sum_{S\subseteq\Lambda} \hat{f}(S)\chi_S(\omega)
$,
where $\hat{f}(S):=\E[f\chi_S]$ is the Fourier--Walsh coefficient of $f$.
Note that $\hat f(\varnothing)$ is the average $\E f$ and by
Parseval's formula
$
\E[f^2] = \sum_{S\subseteq\Lambda} \hat f(S)^2
$.
This orthogonal basis turns out to be an extremely useful one for
studying noise sensitivity,
as the following easily verified formula demonstrates:
%
%
\begin{equation}
\label{ecorrelationFourier} \E\bigl[f(\omega) f\bigl(\omega^{\eps
}\bigr
)\bigr] = \sum
_S \hat f(S)^2 (1-
\eps)^{\vert S\vert}.
\end{equation}
This yields
\[
\Cov\bigl(f_{n}(\omega),f_{n}\bigl(
\omega^{\varepsilon}\bigr) \bigr) = \sum_{S\neq\varnothing} \hat
f(S)^2 (1-\eps)^{\vert S\vert}.
\]

The following theorem now follows immediately; note importantly how it
shows that if the appropriate covariance
goes to 0 for one value of $\varepsilon$, then it does so for all
$\varepsilon
$. Note that there is no condition
on the sequence $(p_n)$.

%
\begin{theorem} \label{thmBKS} Let $(f_n)$ be a sequence of Boolean
functions. Then $(f_n)$ is $\NS$ if and only if any one of the
following conditions holds:
\begin{longlist}[(3)]
\item[(1)] For some $0<\varepsilon<1 $ we have
$\lim_{n\to\infty} \sum_{S\neq\varnothing} \hat{f}_n(S)^2
(1-\eps
)^{\vert S\vert}=0$.
\item[(2)] For every $0<\varepsilon<1 $ we have
$\lim_{n\to\infty} \sum_{S\neq\varnothing} \hat{f}_n(S)^2
(1-\eps
)^{\vert S\vert}=0$.
\item[(3)] For every $k$ we have $\lim_{n\to\infty} \sum
_{0<\vert S\vert<k}\hat{f}_n(S)^2=0$.
\end{longlist}
\end{theorem}

A very useful mnemonic device is the so-called spectral sample $\cS
=\cS
_f$ of a~Boolean
function $f$, defined distributionally by
\[
\P(\cS= S):=\hat f(S)^2\qquad(S \subset\Lambda).
\]
The total weight of this distribution is less than 1 (unless $f\equiv1$).
Note that the terms in items~(1) and~(3) in
Theorem~\ref{thmBKS}, respectively, become
\[
\E\bigl[(1-\varepsilon)^{\vert\cS_n\vert} \One_{\{\cS\neq
\varnothing
\}} \bigr]\quad\mbox{and}\quad\P
\bigl(0< \vert\cS_n\vert< k\bigr).
\]

It turns out that $\NS$ is equivalent to another condition---appearing
perhaps stronger at first glance---according to which for most $\omega$
with $f_n(\omega)=1$, the conditional probability that
$f_n(\omega^\varepsilon)=1$ given $\omega$ is close to the unconditional
probability.

%
\begin{proposition} \label{proequivalentNSdefn}
Let $(f_n)$ be a sequence of Boolean functions. Then $(f_n)$ is $\NS$
if and only if any one of the following conditions holds:
\begin{longlist}[(2)]
\item[(1)] $ [\P(f_n(\omega^\varepsilon)=1 \mid
\omega) -\P(f_n(\omega)=1) ]\stackrel{\mathrm{p}}{\to} 0$.
\item[(2)] $ [\P(f_n(\omega^\varepsilon)=1
\mid\omega) -\P(f_n(\omega)=1) ]
\One_{\{f_n(\omega)=1\}}\stackrel{\mathrm{p}}{\to}0$.
\end{longlist}
\end{proposition}

\begin{pf}
It is immediate that~(1) implies~(2).
To see that~(2) implies $\NS$ as
per~\eqref{eqnNS}, simply write the expression appearing in~\eqref{eqnNS}
as
\[
\sum_{\omega\dvtx f_n(\omega)=1} \bigl[\P\bigl(f_{n}\bigl(
\omega^\varepsilon\bigr)=1\mid\omega\bigr) - \P( f_{n}=1 )
\bigr] \frac{\P(\omega)}{\P( f_{n}=1 )}.
\]
It remains to show that $\NS$ implies~(1).
It is easy to verify that
\[
\Var\bigl(\P\bigl(f_{n}\bigl(\omega^\varepsilon\bigr)=1\mid
\omega\bigr) \bigr) =\sum_{S\neq\varnothing}
\hat{f}_n(S)^2 (1-\eps)^{2\vert S\vert}.
\]
Therefore, by Theorem~\ref{thmBKS}, if $(f_n)$ is $\NS$, we can infer that
\[
\lim_{n\to\infty}\Var\bigl(\P\bigl(f_{n}\bigl(
\omega^\varepsilon\bigr)=1\mid\omega\bigr)\bigr)=0.
\]
Since $\E[\P(f_{n}(\omega^\varepsilon)=1\mid\omega)]=\P(f(\omega)=1)$,
this immediately gives~(1).
\end{pf}

While Theorem~\ref{thmBKS} is quite easy, Theorem~\ref{thNSmainresult}
is much deeper.
It turns out that the converse of Theorem~\ref{thNSmainresult} with
constant $p$ is true for
monotone functions as we now explain. First, for a monotone Boolean
function $f$ mapping into
$\{0,1\}$, one can easily check that
%
%
\begin{equation}
\label{eqnFourierInfluenceRelationship} \hat f\bigl(\{i\}\bigr
)=\sqrt
{p(1-p)} \Inf_i(f).
\end{equation}
This formula together with Theorem~\ref{thmBKS} immediately yields the converse
of Theorem~\ref{thNSmainresult} for fixed $p$. This reinterprets
Theorem~\ref{thNSmainresult} in the monotone case as saying that for
constant $p$,
if the ``sum of the squares of the level 1 Fourier coefficients''
$\sum_{\vert S\vert=1}\hat{f}_n(S)^2$ approaches 0, then the
sequence in $\NS$.

We now consider Theorem~\ref{thNSmainresult} in the context of varying
$p$, in particular for $p$ tending
to 0 with $n$. As above, for monotone functions,~\eqref
{eqnFourierInfluenceRelationship} and
Theorem~\ref{thmBKS} yield the fact that for arbitrary $(p_n)$, $\NS$ implies
%
%
\begin{equation}
\label{eSquaredInfluencesTo0WITHp} \lim_{n\to\infty}p(1-p)\sum
_i \Inf_i(f_n)^2 = 0.
\end{equation}
From this discussion, it follows that the version of Theorem~\ref
{thNSmainresult} that one might hope for,
for arbitrary $(p_n)$, is that~\eqref{eSquaredInfluencesTo0WITHp}
implies $\NS$; equivalently, for monotone
functions, convergence of the level 1 Fourier coefficients implies $\NS$.
Unfortunately, this is not true as we saw in the previous subsection
for the event ``containing a $K_4$.''
Alternatively, if we let $p_n=1/n$ and consider the indicator function of
containing a triangle, then it is easy to see that this sequence is not
$\NS$ (and in fact noise stable,
see this definition below) although~\eqref{eSquaredInfluencesTo0WITHp}
is of order $1/n$.
The stability of the indicator function $f_n$ for containing a triangle
implies that
$
\lim_{k\to\infty} \sup_{n} \sum_{\vert S\vert\ge k}\hat{f}_n(S)^2=0
$.
In addition, in~\cite{FR} it is shown that for any $k\not\equiv
0\pmod
{3}$, this $f_n$ satisfies
\[
\lim_{n\to\infty} \sum_{\vert S\vert= k}
\hat{f}_n(S)^2=0;
\]
that is, the Fourier weights are concentrated on levels $0,3,6,\ldots$
but stay near 0.
(Such a thing cannot occur for monotone functions with constant $p$.)

We end this subsection by
defining the closely related (but opposite) concept to $\NS$, namely
\emph{noise stability}.

\begin{definition} \label{defstab1}
The sequence of functions $f_{n}\dvtx\{0,1\}^{\Lambda_n}\rightarrow\{
0,1\}$
is noise stable ($\NSTAB$) if for any $\delta>0$, there exists an
$\varepsilon>0$ such that
\[
\sup_n \P\bigl(f_{n}(\omega)\neq
f_{n}\bigl(\omega^{\varepsilon}\bigr)\bigr)\le\delta.
\]
\end{definition}

If $\varepsilon_n \to0$ with $n$, one can talk about $\NSTAB$ with
respect to $\{\varepsilon_n\}$ in the obvious way.
Note that while \SNSv\ and $\NS$ with respect to a sequence $\{
\varepsilon
_n\}$ going to 0 is stronger than
ordinary \SNSv\ and $\NS$, $\NSTAB$ with respect to such a sequence is
weaker than ordinary $\NSTAB$.

\subsection{Relation to coarse and sharp thresholds}

It is natural to wonder where the important results in \cite{FR}
concerning sharp
thresholds fall into the context of this paper.
In short, they occur in a very different regime. To explain this,
consider for the moment $p=1/2$.
There are three common scenarios that can occur (as well as various
combinations):

\begin{longlist}[(1)]
\item[(1)] $\E\vert\cS_n\vert= O(1)$.
\item[(2)] $\E\vert\cS_n\vert\to\infty$, and yet $\vert\cS
_n\vert$ is bounded in probability.
%
\item[(3)]
For every fixed $k$ we have $\P(0<\vert\cS_n\vert<k)\to0$,
that is, $(f_n)$ is $\NS$.
\end{longlist}

The first scenario occurs, for example, if $f_n$ only depends on a
fixed finite number of variables independent
of $n$. An example where the second scenario occurs is the sequence of
majority functions. Similar
to~\eqref{eSquaredInfluencesTo0WITHp}, there is another relationship
between influences and the Fourier
picture which does not require monotonicity.
%
This states that $ \sum_{S}\hat{f}(S)^2\vert S\vert=p(1-p)\sum_i
\Inf_i(f)$,
or equivalently,
%
%
\begin{equation}
\label{eFourAndInfluencesGeneralp} \E\vert\cS\vert=p(1-p)\E
\vert\cP\vert
\end{equation}
(as was established for $p=1/2$ in~\cite{KKL}; the case of general $p$
follows similarly).

In~\cite{FR}, results of the form that if you are in the first
scenario, then for graph properties,
the function can be well approximated by functions which depend on a
fixed number of graphs.
Since the context of~\cite{FR} was $p=o(1)$, in view of~\eqref
{eFourAndInfluencesGeneralp},
the assumptions in~\cite{FR} are of the form $p\sum_i \Inf_i(f)\le C$.

\subsection{Maximum cliques in random graphs}\label{secprelimcliques}

As mentioned above, the maximum clique of $\cG(n,p)$ for $p=1/2$
concentrates on 1 point for most values of $n$, yet for infinitely many
values of $n$ it is
concentrated on 2 points. It is for the latter values of $n$ that we
have a nondegenerate indicator function
corresponding to the event that we contain a clique of size about
$k_n\sim2 \log_2 n$.
We describe here how Theorem~\ref{thNSmainresult} yields $\NS$, as was
indicated by Jeff Kahn. Consider the expected size of $\cP_n$ (the set
of pivotal edges). Since $p=1/2$, Lemma~\ref{lemmaPivotalFormula} gives
\[
\E\vert\cP_n\vert=2\P(f_n=1)\E\bigl[\vert
\cP_n\vert\mid f_n=1\bigr].
\]
Hence,\vspace*{1pt} for the nondegenerate $n$ we focus on,
$\E\vert\cP_n\vert$ and $\E[\vert\cP_n\vert\mid f_n=1]$
are of the same order. Clearly whenever $f_n=1$
necessarily $\vert\cP_n\vert=O(\log^2 n)$ since if there is at
least one clique, one can choose
such a clique arbitrarily and then observe that any pivotal edge must
belong to it.
This shows that $\E\vert\cP_n\vert=O(\log^2 n)$, and hence the
influence of each edge
is of order at most $(\frac{\log n}{n})^2$. Squaring this and
multiplying by the number of edges, one obtains that
$
\sum_i \Inf_i(f_n)^2 \lesssim(\log n)^4 /n^2
$.
Since this approaches 0 with $n$, Theorem~\ref{thNSmainresult} yields
noise sensitivity.

\section{From witnesses to noise sensitivity}\label{secwit-to-noise}
In this section we relate noise sensitivity to strong noise
sensitivity. Via this connection we prove quantitative versions of
Theorems~\ref{thmcube-root-cycle} and~\ref{thmmin-degree}.

\subsection{Strong noise sensitivity}\label{subsecstrnoise}
We begin with a straightforward lemma showing that strong noise
sensitivity indeed implies the standard one.

\begin{lemma} \label{lemmaSNStoNS}
Let $(f_{n})$ be a nondegenerate sequence of monotone Boolean functions.
If $(f_n)$ is \SNSv, then it is
noise sensitive. Furthermore, \SNSv\ w.r.t. $\varepsilon=\varepsilon
(n)\to0$
implies quantitative $\NS$ w.r.t. the same $\varepsilon$.
\end{lemma}

\begin{pf}
By the definition of noise sensitivity in~\eqref{eqnNS}, we aim to
show that
\[
\P\bigl(f_n\bigl(\omega^{\varepsilon}\bigr)=1\mid
f_n(\omega)=1 \bigr) - \P(f_n=1) \to0
\]
as $n\to\infty$, where $\varepsilon=\varepsilon(n)$ is allowed to
tend to $0$
with $n$.
By the FKG inequality we have $\P(f_n(\omega^{\varepsilon})=1\mid
f_n(\omega)=1 ) \geq\P(f_n=1)$, and it remains to provide the
corresponding upper bound. Let $\cW_1=\{W_1,\ldots,W_{m_n}\}$ be the
1-witnesses for $f_n$ (arbitrarily ordered), and define the variable
$J$ to be
\[
J = \min\{ 1 \leq j \leq m_n\dvtx \omega_{W_j} \equiv1\}
\]
or $\infty$ in case $f_n(\omega)=0$. With this notation,
%
%
\begin{eqnarray}\label{eqNeededLimitAgain}
&& \P\bigl(f_n\bigl(\omega^{\varepsilon}\bigr)=1\mid
f_n(\omega)=1\bigr)
\nonumber\\[-8pt]\\[-8pt]\nonumber
&&\qquad = \sum_{j=1}^{m_n}
\P\bigl(f_n\bigl(\omega^{\varepsilon}\bigr)=1\mid J=j\bigr) \P
\bigl(J=j\mid f_n(\omega)=1\bigr),
\end{eqnarray}
and again by FKG we see that
\[
\P\bigl(f_n\bigl(\omega^{\varepsilon}\bigr)=1\mid J=j \bigr)\leq
\P\bigl(f_n\bigl(\omega^{\varepsilon}\bigr)=1\mid
\omega_{W_j} \equiv1 \bigr)
\]
since we can condition on $\{J=j\}$ by first conditioning on
$\{\omega_{W_j} \equiv1 \}$ (obtaining a positively associated measure
which enjoys the FKG inequality) and then further conditioning
on the decreasing event $\bigcap_{j'<j} \{ \omega_{W_{j'}} \not
\equiv1
\}$. The\vspace*{1pt} latter can only decrease
the probability of the increasing event $\{f_n(\omega^{\varepsilon
})=1\}$;
thus the last display is established, and altogether we obtain that
%
%
\begin{equation}
\label{eq-ns-leq-sns} \P\bigl(f_n\bigl(\omega^{\varepsilon}\bigr
)=1\mid
f_n(\omega)=1 \bigr) \leq\max_{W\in\cW_1} \P
\bigl(f_n\bigl(\omega^{\varepsilon}\bigr)=1\mid
\omega_{W} \equiv1 \bigr).
\end{equation}
Subtracting $\P(f_n=1)$, and taking $n\to\infty$ now completes the
proof by the definition of \SNSv\ in~\eqref{eqnSNS}.
\end{pf}

%
\begin{remark}\label{remessential-str-NS}
The proof that strong noise sensitivity implies the standard one, in
fact requires a slightly weaker condition than the one stated in~\eqref
{eqnSNS}. Instead of having $\max_W[\P(f_n(\omega^\varepsilon
)=1\mid
\omega_W\equiv1 )-\P(f_n=1)] \to0$, we only need an expectation over
this quantity
w.r.t. a certain distribution over the witnesses (the first $W$ to
appear according to some ordering) to vanish.

In particular, Lemma~\ref{lemmaSNStoNS} remains valid under the
analogue of~\eqref{eqnSNS} for all witnesses $W$ except some subset
$\cW_1^*\subset\cW_1$
with $\P(\bigcup_{W\in\cW_1^*}\{\omega_W\equiv1\})\to0$.
\end{remark}

\begin{example*}[(Tribes)]
Recalling the definition of the tribes function from the \hyperref[secIntro]{Introduction},
a 1-witness $W\in\cW_1$ is a full block. Writing
\begin{eqnarray*}
&& \P\bigl(f_n\bigl(\omega^\varepsilon\bigr)=1\mid
\omega_W\equiv1 \bigr)
\\
&&\qquad \leq\P\biggl(\bigcup
_{W' \neq W} \bigl\{\omega^\varepsilon_{W'}\equiv1
\bigr\} \Big|\omega_W\equiv1 \biggr)
+ \P\bigl(\omega^\varepsilon_{W}\equiv1 \mid
\omega_W\equiv1 \bigr),
\end{eqnarray*}
the last term is equal to $(1-\varepsilon/2)^{\vert W\vert}\to0$
as we have $\vert W\vert\sim\log_2 n \to\infty$ with $n$, while
the first term on the right-hand side is equal to
\[
\P\biggl(\bigcup_{W' \neq W} \{\omega_{W'}
\equiv1\} \biggr) \leq\P(f_n=1)
\]
since any two distinct witnesses $W,W'$ are disjoint, and thus $\{
\omega
_W\equiv1\}$ and $\{\omega_{W'}\equiv1\}$ are independent.
This establishes that
\[
\limsup_{n\to\infty}\max_W \bigl[\P
\bigl(f_n\bigl(\omega^\varepsilon\bigr)=1\mid
\omega_W\equiv1 \bigr)-\P(f_n=1)\bigr] \leq0,
\]
and since it is always nonnegative (by a monotonicity argument), we
conclude that the tribes function is \SNSv.
\end{example*}

\begin{example*}[(Recursive majority)]
Consider\vspace*{1pt} first the canonical 1-witness $W$ for the recursive 3-majority
of $n=3^k$ variables (i.e., $W$ repeatedly reveals the first 2 of the 3
children of a vertex).
Recalling that $p=1/2$, the quantity
%
%
\begin{equation}
\label{eq-zeta-recusive-maj} \zeta_k^\varepsilon= \P\bigl(f_n
\bigl(\omega^\varepsilon\bigr)=1\mid\omega_W\equiv1 \bigr)
\end{equation}
is easily seen (by the nature of this recursive definition) to satisfy
\[
\zeta_{k}^\varepsilon= \bigl(\zeta_{k-1}^\varepsilon
\bigr)^2 + 2\zeta_{k-1}^\varepsilon\bigl(1-
\zeta_{k-1}^\varepsilon\bigr)p = \zeta_{k-1}^\varepsilon,
\]
thus $\zeta_k^\varepsilon= \zeta_0^\varepsilon= 1-\varepsilon/2$
for any $k$.
In particular, recursive 3-majority is not \SNSv\ despite the fact that
it is noise sensitive [indeed, it is easy to see that the influence of
a variable is $2^{-k}$, and so the sum of squared influences is
$(3/4)^k$ which vanishes as $k\to\infty$, satisfying the BKS criterion
for $\NS$].

We emphasize that for this function not only is $\P(f_n(\omega
^\varepsilon
)=1\mid\omega_W\equiv1)$ bounded away from $\P(f_n=1)=1/2$ (enough in
itself to preclude \SNSv), but rather it is $1-\delta(\varepsilon)$ where
$\delta(\varepsilon)\to0$ with $\varepsilon$. This resembles the
notion of
noise stability [where $\P(f_n(\omega^\varepsilon)=1\mid f_n(\omega)=1)$
approaches $1$ as $\varepsilon\to0$].

Interestingly, further increasing the size of the majority yields an
even stronger witness dependency.
As before $\P(f_n(\omega^{\varepsilon})=1\mid\omega_W\equiv1)\geq
1-\delta
(\varepsilon)$, but instead of $\delta(\varepsilon)=\varepsilon/2$
(the case for
3-majority), we now have $\delta(\varepsilon)=o(1)$.

%
\begin{claim} \label{clm5Majority}
Let $f_{n}$ be the recursive 5-majority function on $n=5^k$ vertices.
Then for every
$0<\varepsilon<1$,
\[
\lim_{n\to\infty} \inf_{W\in\cW_1}\P
\bigl(f_n\bigl(\omega^{\varepsilon}\bigr)=1\mid
\omega_W\equiv1\bigr)=1.
\]
\end{claim}

\begin{pf}
As before, consider the canonical 1-witness $W$ which repeatedly
specifies $3$ of $5$ children of a vertex, and define $\zeta
_k^\varepsilon
$ as in~\eqref{eq-zeta-recusive-maj}.
In this way, conditioned on $W$, the root has $3$ children each of
which is a $\operatorname{Bernoulli}(\zeta_{k-1})$ and $2$ other children which are
$\operatorname{Bernoulli}(1/2)$. It is then easy to check that
\[
\zeta_{k}^\varepsilon=-\tfrac{1}{2}\bigl(
\zeta_{k-1}^\varepsilon\bigr)^3+ \tfrac{3}{4}\bigl(
\zeta_{k-1}^\varepsilon\bigr)^2 +\tfrac{3}{4}
\zeta_{k-1}^\varepsilon,
\]
and as before $\zeta_{0}^\varepsilon=1-\frac{\varepsilon}{2}$. Letting
%
%
\begin{equation}
\label{eq-hx-def} h(x)=-\tfrac{1}{2}x^3+\tfrac{3}{4}x^2+
\tfrac{3}{4}x,
\end{equation}
we thus have $\zeta_{k}^\varepsilon=h(\zeta_{k-1}^\varepsilon)$,
and the
proof follows from the easily verifiable facts that
$h$ maps $[0,1]$ to itself with fixed points at $\{0,1/2,1\}$, out of
which $1/2$ is a repelling fixed point since $h'(1/2)=9/8>1$. Hence,
$\zeta_k^\varepsilon\to1$ as long as $\zeta_0^\varepsilon> 1/2$,
which is
indeed the case by the hypothesis $0<\varepsilon< 1$.
\end{pf}
\end{example*}
We note in passing that the analogue of Claim~\ref{clm5Majority} for
noise sensitivity (rather than strong noise sensitivity) is not
possible for any
nondegenerate sequence $(f_n)$, since $\P(f_{n}(\omega^\varepsilon
)=1\mid f_{n}(\omega)=1 )\leq1-g(\varepsilon)$ for $g(\varepsilon
)\gtrsim
\varepsilon$.

\subsection{Quantitative noise sensitivity for cycles at criticality}
In this section we prove the following stronger form of Theorem~\ref
{thmcube-root-cycle}, offering a more detailed examination of the
phase transition for noise sensitivity around the point where the noise
parameter $\varepsilon$ is of order $n^{-1/3}$.

\begin{theorem}\label{thmcube-root-quant}
Fix $0<a<b$, and let $f_n$ be the property that $\cG(n,p)$ with
$p=(1+O(n^{-1/3}))/n$
contains a cycle of length $\ell\in(a n^{1/3}, b n^{1/3})$.
Then $(f_n)$ is nondegenerate, and according to the noise parameter
$\varepsilon(n)$ we have:
\begin{longlist}[(iii)]
\item If $\varepsilon\gg n^{-1/3}$, then
$(f_n)$ is $\NS$ and furthermore \SNSv\ w.r.t. $\varepsilon$.
\item If $\varepsilon\ll n^{-1/3}$, then
$(f_n)$ is $\NSTAB$ w.r.t. $\varepsilon$.
\item If $\varepsilon\asymp n^{-1/3}$, then
$(f_n)$ is neither $\NS$ w.r.t. $\varepsilon$ nor $\NSTAB$
w.r.t. $\varepsilon$.
\end{longlist}
\end{theorem}

\begin{pf}
Let $G\sim\cG(n,p)$, and let $\omega$ denote its edge configuration
(i.e., $\omega_{uv}$ is set to $1$ if the edge $uv$ is present in $G$
and it is\vspace*{1pt} 0 otherwise).
Let $\lambda_1, \lambda_2>0$ be such that $1-\lambda_1 n^{-1/3} \leq
np \leq1 + \lambda_2 n^{-1/3}$ for all $n$ and let $X_\ell$ count the
number of cycles of length $\ell$ in $G$.
Put $\cI= (a n^{1/3}, b n^{1/3})$, and define
\[
X = \sum_{\ell\in\cI} X_\ell= \#\{W \in
\cW_1\dvtx \omega_W\equiv1\}.
\]
As the number of potential cycles notwithstanding automorphisms in $G$
(i.e., the cardinality of $\cW_1$) is $\frac{1}2\binom{n}{\ell
}(\ell
-1)!$, we see that $\E X_\ell\sim(np)^\ell/(2\ell) $ uniformly over
$\ell\in\cI$, and so
%
%
\begin{equation}
\label{eq-X-first-moment} \bigl(1-o(1)\bigr) e^{-\lambda_1 b} \leq
\frac
{\E X}{{1}/2\log(b/a)} \leq
\bigl(1+o(1)\bigr) e^{\lambda_2 b}.
\end{equation}
At this point, the FKG inequality immediately implies that
%
%
\begin{equation}
\label{eq-X-nonden-1} \P(X=0)\geq\prod_{ \ell\in\cI} \bigl(1 -
p^\ell\bigr)^{{1}/2 \binom{n}{\ell}(\ell-1)!} \geq e^{-(1+o(1))\E
X}
\end{equation}
(where the second inequality used the fact that $1-x = e^{-(1+o(1))x}$
as $x\to0$) which is bounded away from $0$, thanks to~\eqref
{eq-X-first-moment}.

Next, we examine $\Var(X)$. For any two cycles $W \neq W'$, let
$\kappa
(W,W')$ count the number of nontrivial connected components in the
intersection of the edges of $W$ and $W'$ (each of which is a simple
path), and define
\[
\zeta_m:= \mathop{\sum_{W,W'\in\cW_1}}_{\kappa(W,W') =m}
\P(\omega_{W}\equiv1, \omega_{W'}\equiv1 )
\]
for each $m\geq1$. With this notation,
\[
\Var(X) \leq\E X + \sum_{m\geq1} \zeta_m,
\]
prompting the task of estimating the $\zeta_m$'s. In what follows, let
$\ell,\ell'$ run over the potential lengths of $W,W'$, respectively,
while $s$ will run over the total number of edges in the intersection
of $W$ and $W'$. We then have
\begin{eqnarray*}
\zeta_m &\leq&\sum_{\ell\in\cI}\sum
_{\ell'\in\cI}\sum_{m\leq s<\ell} \binomm{s} {m-1}
\bigl(2\ell\ell'\bigr)^m \frac{n^{\ell}p^{\ell}}{2\ell}
\frac{n^{\ell'-(s+m)}p^{\ell'-s}}{2\ell'},
\end{eqnarray*}
where the first term accounts for the partitioning of the $s$ total
edges into the $m$ intersection paths (with room to spare), the second
one accounts for selecting the paths within $W$ (starting point and
direction per path) as well as their position within $W'$ and the final
two terms correspond to selecting $W$ and $W'$ with this intersection
pattern. The fact that $np\leq1+\lambda_2 n^{-1/3}$ translates into
having $(np)^{\ell+\ell'-s } < C$ for $C=e^{2b\lambda_2} $, thus
\begin{eqnarray*}
\zeta_m &\leq&\frac{C}{2n} \sum_{\ell}
\sum_{\ell'} \sum_s
\frac{(2\ell\ell' s / n)^{m-1}}{(m-1)!} \leq\frac{C}2 (b-a)^2 b
\frac{(2b^3)^{m-1}}{(m-1)!}
\end{eqnarray*}
and
\[
\sum_{m\geq1} \zeta_m \leq
\frac{C}2 (b-a)^2 b e^{2b^3} = O(1).
\]
In particular we get that $\E[X^2] = O(1)$.


An immediate consequence of Cauchy--Schwarz is that any nonnegative
random variable $X$ satisfies $\P(X>0)\geq(\E X)^2/\E[X^2]$; thus in
particular $\P(X>0)$ is bounded away from 0. Combining this
with~\eqref
{eq-X-nonden-1}, it now follows that $(f_n)$ is nondegenerate.

\begin{remark*}
Using similar moment analysis, one can infer that the limiting
distribution of $X$ is not Poisson; for instance, already $\zeta_1$ is
uniformly bounded away from 0 [as it is apparent that $\zeta_1 \geq
(\frac{1}2-o(1)) (b-a)^2 a $ from the argument above], and consequently
$\Var(X)$ is bounded away from $\E X$ as $n\to\infty$.
\end{remark*}

\begin{itemize}
\item\emph{Noise sensitivity if and only if $\varepsilon\gg n^{-1/3}$}.
The strong noise sensitivity of $(f_n)$ when $\varepsilon\gg
n^{-1/3}$ will be derived from a calculation akin to the second moment
analysis given above, yet this time it will incorporate the noise in
the following prominent way. For any $W\in\cW_1$ of some length $\ell
$, define
\[
\zeta'_m:= \mathop{\sum_{W'\in\cW_1}}_{\kappa(W,W') =m}
\P\bigl(\omega_{W'}^\varepsilon\equiv1\mid\omega_{W}
\equiv1 \bigr).
\]
By the same line of arguments presented above for $\zeta_m$, we have
\begin{eqnarray*}
\zeta'_m &\leq&\sum_{\ell'}
\sum_s \binomm{s} {m-1} \bigl(2\ell
\ell'\bigr)^m \frac{n^{\ell'-(s+m)}p^{\ell'-s}}{2\ell'}\bigl
(1-\varepsilon(1-p)
\bigr)^s
\\
&\leq&\frac{C\ell}n \sum_{\ell'}\sum
_s \frac{(2\ell\ell' s/n)^{m-1}}{(m-1)!} \bigl(1-\varepsilon(1-p)
\bigr)^s,
\end{eqnarray*}
again using the fact that $(np)^{\ell'-s} < C$ for $C=e^{\lambda_2 b}$.
Thanks to the crucial last term, accounting for the probability of
retaining the $s$ edges in the intersection paths, it follows that
\begin{eqnarray*}
\zeta'_m &\leq&\frac{C b (b-a)}{n^{1/3}} \frac{(2b^3)^{m-1}}{(m-1)!}
\sum_s \bigl(1-\varepsilon(1-p)\bigr)^s
\leq\frac{C b (b-a)}{n^{1/3} \varepsilon(1-p)} \frac
{(2b^3)^{m-1}}{(m-1)!},
\end{eqnarray*}
and so
%
%
\begin{equation}
\label{eq-sum-zeta} \sum_{m\geq1}\zeta'_m
\leq\frac{C b(b-a)e^{2b^3}}{n^{1/3} \varepsilon(1-p)} = O \biggl(
\frac
{1}{\varepsilon n^{1/3}} \biggr).
\end{equation}
In\vspace*{1pt} particular, when $\varepsilon\gg n^{-1/3}$ [part~(i)]
we can infer that $\sum_{m\geq1} \zeta'_m = o(1)$.
To deduce that $(f_n)$ is \SNSv\ in this case, argue as follows. Fix in
what follows some $W \in\cW_1$.
Partitioning $\cW_1 = \{W\} \cup\cW_1' \cup\cW_1'' $ where $\cW
_1':=\{
W' \neq W\dvtx \kappa(W,W')>0\}$ (and $\cW_1''$ contains cycles that are
edge-disjoint from $W$,
thus independent) gives
\begin{eqnarray*}
\P\bigl(f_n\bigl(\omega^\varepsilon\bigr)
=1 \mid\omega_W\equiv1\bigr)
&\leq& \P\biggl(\bigcup
_{W' \in\cW_1'} \bigl\{\omega_{W'}^\varepsilon\equiv1
\bigr\} \Big|\omega_W\equiv1 \biggr)
\\
&&{}
+ \P\biggl(\bigcup_{W'' \in\cW_1''} \bigl\{
\omega_{W''}^\varepsilon\equiv1\bigr\} \Big|\omega_W
\equiv1 \biggr)
\\
&&{} + \P\bigl(\omega_{W}^\varepsilon\equiv1 \mid
\omega_W\equiv1 \bigr).
\end{eqnarray*}
By the definition of $\zeta'_m$ and equation~\eqref{eq-sum-zeta} in
the case of $\varepsilon\gg n^{-1/3}$,
\begin{eqnarray*}
\P\biggl(\bigcup_{W' \in\cW_1'} \bigl\{
\omega_{W'}^\varepsilon\equiv1\bigr\} \Big|\omega_W
\equiv1 \biggr) &\leq&
\sum_{m\geq1}
\zeta'_m = o(1),
\end{eqnarray*}
while clearly
\[
\P\biggl(\bigcup_{W'' \in\cW_1''} \bigl\{
\omega_{W''}^\varepsilon\equiv1\bigr\} \Big|\omega_W
\equiv1 \biggr) = \P\biggl(\bigcup_{W'' \in\cW_1''} \{
\omega_{W''}\equiv1\} \biggr) \leq\P(f_n=1)
\]
and
\[
\P\bigl(\omega_{W}^\varepsilon\equiv1 \mid
\omega_W\equiv1 \bigr) = \bigl(1-\varepsilon(1-p)\bigr)^{\ell}
\leq e^{-\varepsilon(1-p)an^{1/3}} = o(1),
\]
again thanks to the assumption that $\varepsilon\gg n^{-1/3}$.
Altogether, this yields
\begin{eqnarray*}
\P\bigl(f_n\bigl(\omega^\varepsilon\bigr)&=&1\mid
\omega_W\equiv1\bigr) \leq\P(f_n=1) +o(1),
\end{eqnarray*}
thus establishing that $(f_n)$ is \SNSv\ when $\varepsilon\gg n^{-1/3}$.

We will now show that $(f_n)$ is not $\NS$ w.r.t. $\varepsilon$ whenever
$\varepsilon= O(n^{-1/3})$, to which end we will appeal to the Fourier
representation described in~Section~\ref{secprelim}.
The first observation, using Lemma~\ref{lemmaPivotalFormula}, is that
the set of pivotals $\cP_n$ satisfies
\[
\E\vert\cP_n\vert= p^{-1} \P(f_n =
1) \E\bigl[ \vert\cP_n\vert\mid f_n=1\bigr]
\leq p^{-1} b n^{1/3},
\]
where the last inequality relied on the fact that given that there
exists some cycle $C_\ell$ with $\ell\in\cI$ in $G$, every pivotal edge
must in particular belong to $C_\ell$, and so there can be at most
$\ell\leq b n^{1/3}$ such edges.
By~\eqref{eFourAndInfluencesGeneralp}, the spectral sample $\cS_n$ satisfies
\[
\E\vert\cS_n\vert= p(1-p)\E\vert\cP_n
\vert\leq b n^{1/3},
\]
which will rule out noise sensitivity for $(f_n)$ w.r.t. $\varepsilon$ by
a standard argument.
As we have established above that $(f_n)$ is nondegenerate, let
$\theta< 1$ be some constant such that $\P(f_n=1) < \theta$ for any
sufficiently large $n$, and set
\[
M = 2b/(1-\theta).
\]
Since $\P(\cS_n = \varnothing) = \P(f_n=1) < \theta$ while $\P
(\vert
\cS_n\vert>M n^{1/3}) \leq(1-\theta)/2$ by Markov's inequality, we
deduce that
\[
\P\bigl( 0 < \vert\cS_n\vert< M n^{1/3} \bigr) > 1
- \theta- \frac{1-\theta}2 = \frac{1-\theta}2,
\]
and in particular this probability is bounded away from 0.
Due to the hypothesis $\varepsilon= O(n^{-1/3})$, we further have
\[
(1-\varepsilon)^{\vert\cS_n\vert}\One_{\{0<\vert\cS_n\vert
< Mn^{1/3}\}} \geq e^{-(1-o(1))\varepsilon M n^{1/3}} \geq c
\]
for some fixed $c>0$, and altogether we obtain that
\[
\liminf_{n\to\infty} \Cov\bigl(f_{n}(
\omega),f_{n}\bigl(\omega^{\varepsilon}\bigr) \bigr)=\liminf
_{n\to\infty} \E\bigl[(1-\varepsilon)^{\vert\cS_n\vert}
\One_{\{\cS_n\neq\varnothing\}} \bigr] > 0;
\]
that is, $(f_n)$ is not $\NS$ w.r.t. $\varepsilon$ in this regime.

\item\emph{Noise stability if and only if $\varepsilon= o(n^{-1/3})$}.
Let $\omega$ be any configuration corresponding to a graph
for which $f_n=1$, where by definition there exists some cycle $W$ of
length $\ell\in(an^{1/3},bn^{1/3})$ such that $\omega_W\equiv1$. Under
the assumption $\varepsilon\ll n^{-1/3}$, we have that $\P(\omega
^\varepsilon
_W \equiv1\mid\omega) \geq1-\varepsilon b n^{1/3} = 1-o(1)$. In other
words, for any $\omega$ such that $f_n(\omega)=1$, we have
$ \P(f_n(\omega^\varepsilon)=1 \mid\omega) = 1-o(1)$,
implying that $(f_n)$ is $\NSTAB$ w.r.t. $\varepsilon$.

To see that $(f_n)$ is not $\NSTAB$ w.r.t. $\varepsilon$ whenever
$\varepsilon
\gtrsim n^{-1/3}$, observe first that if $W$ corresponds to a cycle of
length $\ell\in\cI$, then
\[
\P\bigl(\omega_{W}^\varepsilon\not\equiv1 \mid
\omega_W\equiv1 \bigr) =1- \bigl(1-\varepsilon(1-p)
\bigr)^{\ell} \geq c_0
\]
for some fixed $c_0>0$ which depends on $a$ as well as the implicit
constant in the assumption $\varepsilon\gtrsim n^{-1/3}$.
At the same time, with the same notation as above,
\[
\P\biggl(\bigcap_{W'' \in\cW_1''} \bigl\{
\omega_{W''}^\varepsilon\not\equiv1\bigr\} \Big|
\omega_W\equiv1 \biggr) \geq\P(f_n=0) >
c_1
\]
for some fixed $c_1>0$ thanks to the above established fact that
$(f_n)$ is nondegenerate,
whereas by FKG,
\begin{eqnarray*}
\P\biggl(\bigcap_{W' \in\cW_1'} \bigl\{
\omega_{W'}^\varepsilon\not\equiv1\bigr\} \Big|
\omega_W\equiv1 \biggr)&\geq&\prod_{W'\in\cW_1'}
\P\bigl(\omega_{W'}^\varepsilon\not\equiv1 \mid
\omega_W\equiv1 \bigr)
\\
&\geq& e^{-(1-o(1))\sum_{m\geq1}\zeta'_m} \geq c_2
\end{eqnarray*}
for some fixed $c_2>0$ which depends on $a,b$ and the constant in the
hypothesis $\varepsilon\gtrsim n^{-1/3}$ as specified in~\eqref{eq-sum-zeta}.
Combining the last three inequalities, again by virtue of FKG, we
deduce that
\[
\P\bigl(f_n\bigl(\omega^\varepsilon\bigr)=1 \mid
\omega_W\equiv1 \bigr) \leq1 - c_0 c_1
c_2,
\]
which by equation~\eqref{eq-ns-leq-sns} implies that $\P(f_n(\omega
^\varepsilon)=1\mid f_n(\omega)=1 )$ is bounded away from $1$, precluding
noise stability.
\end{itemize}
This completes the proof.
\end{pf}

%
\begin{remark}
\label{remcrit-no-ns}
One can construct a function which exhibits a phase transition at the
critical window of $\cG(n,p)$, and yet not only is a noise of
$\varepsilon
\gg n^{-1/3}$ (effectively moving $\omega^\varepsilon$ to the subcritical
degenerate regime and then back into the critical window) insufficient
for decorrelating $f_n(\omega),f_n(\omega^\varepsilon)$, neither
does any
fixed $\varepsilon>0$. The following example demonstrates this.

For some constants $0<a<b$ to be determined below, let
$f_n$ the property that the largest component of $G$, denoted by $\cC
_1$, either satisfies $\vert\cC_1\vert> b n^{2/3}$, or
alternatively $a n^{2/3} < \vert\cC_1\vert\leq b n^{2/3}$ while
$G$ further contains a triangle.

Clearly, $\P(f_n=1)=o(1)$ when $G\sim\cG(n,p)$ for $p=(1-\xi)/n$ with
$\xi\gg n^{-1/3}$ as in that case $\vert\cC_1\vert
=o(n^{2/3})$, whereas $\P(f_n=1)=1-o(1)$ when $p=(1+\xi)/n$ for the
same $\xi$ since $\vert\cC_1\vert$ then concentrates around $2
\xi n \gg n^{2/3}$; see, for example,~\cite{Bollobas}, Chapter~6,
and~\cite{JLR}, Chapter~5.\vspace*{1pt}

At $p=(1\pm\xi)/n$ for $\xi=O(n^{-1/3})$ the sequence $(f_n)$ is
nondegenerate. An immediate way to ensure this would be to select $a$
sufficiently small and $b$ sufficiently large. Indeed, it is well known
that $\vert\cC_1\vert/n^{2/3}$ converges in probability to a
nontrivial distribution with full support on $\R_+$, and in particular
for any small $\delta> 0$ we can select $a$ sufficiently small and $b$
sufficiently large so that $\P(a < \vert\cC_1\vert n^{-2/3} < b)
> 1-\delta$. On this event, $f_n$ identifies with the property $g_n$ of
containing a triangle, which is known to be noise stable. In particular,
\[
\P\bigl(f_n\bigl(\omega^\varepsilon\bigr)= f_n(
\omega) \bigr) \geq\P\bigl(g_n\bigl(\omega^\varepsilon
\bigr)=g_n(\omega) \bigr) - 2\delta\geq1 - \delta'
\]
for some $\delta'(\varepsilon,a,b)$ which can be made arbitrarily small
for suitable $\varepsilon,a,b$. This precludes the noise sensitivity of
$f_n$ for any fixed $ \varepsilon>0$, as claimed.

We note in passing that $f_n$ satisfies $\sum_x \hat{f}_n(x)^2
=O(n^{-2/3}) = o(1)$; that is, the BKS criterion for $\NS$ is met, and
nevertheless $(f_n)$ is not $\NS$.
\end{remark}

\subsection{Quantitative noise sensitivity for minimum degree}
Analogously to the previous section, here we prove a stronger version
of Theorem~\ref{thmmin-degree}, which addresses the noise stability
vs. sensitivity at the critical noise level.

\begin{theorem}\label{thmmin-degree-quant}
Let $f_n$ be the property that the minimum degree of $\cG(n,p)$ is at
least $k$ for some fixed $k\geq1$, and suppose $p=p(n)$ is such that
$(f_n)$ is nondegenerate. The following holds depending on the noise
parameter $\varepsilon(n)$:
\begin{longlist}[(iii)]
\item If $\varepsilon\gg\frac{1}{\log n}$,
then $(f_n)$ is $\NS$ and furthermore \SNSn\ w.r.t. $\varepsilon$.\vspace*{2pt}
\item If $\varepsilon\ll\frac{1}{\log n}$,
then $(f_n)$ is $\NSTAB$ w.r.t. $\varepsilon$.\vspace*{2pt}
\item If $\varepsilon\asymp\frac{1}{\log n}$,
then $(f_n)$ is neither $\NS$ w.r.t. $\varepsilon$ nor $\NSTAB$
w.r.t. $\varepsilon$.
\end{longlist}
Moreover, the classification into $\NS$ w.r.t. $\varepsilon$
in~\textup{(i)}, $\NSTAB$ w.r.t. $\varepsilon$ in~\textup{(ii)}
or neither in~\textup{(iii)} holds
for all graph properties listed in Theorem~\ref{thmmin-degree}.
\end{theorem}

\begin{pf}
Let $G\sim\cG(n,p)$, and let $\omega$ denote its edge configuration.
Fix $k\ge1$, and let $D_n$ be the graphs (or corresponding
configurations $\omega$) with minimum degree at least $k$, so that
$f_n(\omega) = \One_{\{\omega\in D_n\}}$. The assumption that $(f_n)$
is nondegenerate is well known
(see, e.g., \cite{Bollobas,JLR}) to correspond to
%
%
\begin{equation}
\label{eq-min-degree-p} p = \frac{\log n + (k-1)\log\log n + O(1)}{n}.
\end{equation}

Consider first the range $\frac{1}{\log n} \ll\varepsilon< 1$. In this
regime, we wish to compare $\P(\omega^\varepsilon\in D_n^c \mid
\omega_W
\equiv0) $ to $\P(\omega\in D_n^c)$ for any 0-witness $W$ for $D_n$.
Clearly, such a 0-witness $W$ is precisely a set of $n-k$ edges
incident to a
vertex. Denoting the vertices by $v_1,v_2,\ldots,v_n$, assume without
loss of generality
that this $W$ consists of the edges $\{ v_1 v_i\dvtx i=2,\ldots
,n-k+1\}$.
By the symmetry of witnesses, it is enough to show that for each
$\varepsilon>0$,
%
%
\begin{equation}
\label{eqnNeededInequality} \liminf_{n\to\infty} \P\bigl(\omega
^{\varepsilon}
\in D_n\mid\omega_{W} \equiv0 \bigr)-\P(\omega\in
D_n ) \ge0.
\end{equation}

Let $A_n$ be the event that the induced subgraph on the vertices $\{
v_2,\ldots,v_n\}$
has minimum degree at least $k$.
We claim that
%
%
\begin{equation}
\label{eqnProjectedTon-1} \liminf_{n\to\infty}\P(\omega\in A_n)-
\P(\omega\in D_n)\ge0.
\end{equation}
(The limit is in fact 0, but this will not be needed.) It suffices to
show that
\[
\lim_{n\to\infty}\P\bigl(\omega\in A^c_n
\cap D_n\bigr)=0.
\]
Any graph in $A^c_n\cap D_n$ has some vertex $v_i$ with $2\leq i \leq
n$ such that
the degree of~$v_i$ is precisely $k$, and $v_1 v_i$ is an edge. By a
union bound,
the probability that $\omega$ satisfies the latter is at most
\[
(n-1) \binomm{n-2} {k-1}p^k (1-p)^{n-1-k}\le
(np)^k e^{-p(n-1-k)} \lesssim\frac{\log n}{n} = o(1),
\]
having plugged in the expression for $p$ from~\eqref{eq-min-degree-p}.
This establishes~\eqref{eqnProjectedTon-1}.

Next, let $B_n$ be the set of graphs where the degree of $v_1$ is at
least $ k$.
We claim that
%
%
\begin{equation}
\label{eqnFirstVertexFine} \lim_{n\to\infty} \P\bigl(\omega
^{\varepsilon
}\in
B_n\mid\omega_W \equiv0 \bigr)=1.
\end{equation}
Indeed, if $C_n$ is the set of graphs where $v_1$ is isolated, then $\P
(\omega\in\cdot\mid\omega_W\equiv0)$ stochastically dominates $\P
(\omega\in\cdot\mid\omega\in C_n)$ where $\P(\omega\in\cdot
\mid A)$
denotes the conditional distribution of $\omega$ conditioned on $A$.
Thus, as $B_n$ is increasing, by FKG we have
%
%
\begin{equation}
\label{eq-min-deg-eps-dependency}
\qquad\P\bigl(\omega^{\varepsilon}\in B_n\mid
\omega_W \equiv0 \bigr) \geq\P\bigl(\omega^{\varepsilon}\in
B_n\mid\omega\in C_n \bigr) = \P\bigl(\Bin(n-1,
\varepsilon p) \geq k \bigr). 
\end{equation}
Since $p\sim\frac{\log n}n$ and $\varepsilon\gg\frac{1}{\log n}$, the
above binomial variable concentrates on $(n-1)\varepsilon p \gg k$;
hence the last expression is $1-o(1)$.
This demonstrates~\eqref{eqnFirstVertexFine}.

To put it all together, observe that
\begin{eqnarray*}
\P\bigl(\omega^{\varepsilon}\in D_n\mid\omega_W
\equiv0 \bigr) &\geq& \P\bigl(\omega^{\varepsilon}\in A_n\cap
B_n\mid\omega_W \equiv0 \bigr)
\\
& =& \P\bigl(\omega^{\varepsilon}\in A_n\mid
\omega_W \equiv0 \bigr)\P\bigl(\omega^\varepsilon\in
B_n\mid\omega_W\equiv0 \bigr),
\end{eqnarray*}
since the events $A_n$ and $B_n$ are (conditionally) independent.
Plugging in~\eqref{eqnFirstVertexFine} and using the independence of
$\{\omega^{\varepsilon}\in A_n\}$ and $\{\omega_W \equiv0 \}$, we
conclude that
\[
\P\bigl(\omega^{\varepsilon}\in D_n\mid\omega_W
\equiv0 \bigr) \geq\P\bigl(\omega^{\varepsilon}\in A_n \bigr) -
o(1),
\]
and the required inequality~\eqref{eqnNeededInequality} now follows
from~\eqref{eqnProjectedTon-1} and completes the proof of part~(i).

For part~(ii) consider any $\omega\in D_n^c$,
whereby the corresponding graph $G$ contains some vertex $v_i$ of
degree less than $k$. Since $\varepsilon= o(1/\log n)$, the probability
that the degree of $v_i$ increases due to the noise is at most
$(n-1)\varepsilon p = o(1)$, and so
$\P(\omega^\varepsilon\in D_n^c \mid\omega) = 1-o(1)$.
Translating this in terms of $f_n$, for any $\omega$ such that
$f_n(\omega)=0$ we have
$ \P(f_n(\omega^\varepsilon)=0 \mid\omega) = 1-o(1)$,
which establishes noise stability w.r.t. $\varepsilon$.

We next proceed to part~(iii), addressing the
critical regime of $\varepsilon\asymp\frac{1}{\log n}$. To show $(f_n)$
is not $\NSTAB$ w.r.t. $\varepsilon$, note first that the binomial
variable in the right-hand side of~\eqref{eq-min-deg-eps-dependency} is
now approximately Poisson with mean bounded away from $0$ and $\infty$,
implying (by the same line of arguments as above) that
\[
\P\bigl(\omega^{\varepsilon}\in D_n\mid\omega_W
\equiv0 \bigr) \geq\delta\P(\omega\in D_n)
\]
for some fixed $\delta>0$ and all $n$, or equivalently,
\[
\P\bigl(\omega^{\varepsilon}\in D_n^c\mid
\omega_W \equiv0 \bigr) \leq1- \delta\P(\omega\in
D_n).
\]
Appealing to equation~\eqref{eq-ns-leq-sns} from the proof of
Lemma~\ref
{lemmaSNStoNS}, and using the symmetry of 0-witnesses, we now deduce that
\[
\P\bigl(f_n\bigl(\omega^{\varepsilon}\bigr)=0 \mid
f_n(\omega)=0 \bigr) \leq1- \delta\P(f_n=1),
\]
which precludes noise stability w.r.t. $\varepsilon$ as $(f_n)$ is
nondegenerate.\vspace*{1.5pt}

To rule out noise sensitivity for $\varepsilon\asymp\frac{1}{\log n}$,
as in the proof of Theorem~\ref{thmcube-root-quant} we appeal to the
Fourier representation of $f_n(\omega^\varepsilon)$.
For any $\omega$ such that $f_n(\omega)=0$, an edge $uv$ can only be
pivotal if every $w\neq u,v$ has degree at least $k$ in $\omega$.
Moreover, if both $u,v$ have degree $k-1$ in $\omega$, then this would
be the unique pivotal edge, and otherwise $\vert\cP_n\vert=n-k$.
In particular, using~\eqref{eFourAndInfluencesGeneralp} and Lemma~\ref
{lemmaPivotalFormula}, we see that
\[
\E\vert\cS_n\vert=p(1-p) \E\vert\cP_n
\vert= p \P(f_n=0) \E\bigl[\vert\cP_n\vert
\mid f_n=0\bigr] \leq\bigl(1+o(1)\bigr) \log n.
\]
As $(f_n)$ is nondegenerate by hypothesis, let $\theta< 1$ be some
constant such that $\P(f_n=1) < \theta$ for large enough $n$, and set $
M = 2/(1-\theta)$.
Since the spectral sample $\cS_n$ satisfies $\P(\cS_n = \varnothing
) = \P
(f_n=1)$, Markov's inequality implies that
\[
\P\bigl( 0 < \vert\cS_n\vert< M \log n \bigr) > 1 - \theta
- \frac{1-\theta}2 - o(1) = \frac{1-\theta}2 - o(1).
\]
Consequently, when $\varepsilon= O(1/\log n)$, there exists some $c>0$
such that
\[
(1-\varepsilon)^{\vert\cS_n\vert}\One_{\{0<\vert\cS_n\vert
< M \log n\}} \geq e^{-(1-o(1))\varepsilon M \log n} \geq c > 0,
\]
and so
\[
\liminf_{n\to\infty} \Cov\bigl(f_{n}(
\omega),f_{n}\bigl(\omega^{\varepsilon}\bigr) \bigr)=\liminf
_{n\to\infty} \E\bigl[(1-\varepsilon)^{\vert\cS_n\vert}
\One_{\{\cS_n\neq\varnothing\}} \bigr] > 0;
\]
that is, $(f_n)$ is not $\NS$ w.r.t. $\varepsilon$ in this regime.

Finally, it remains to extend the classification of either $\NS$ or
$\NSTAB$ w.r.t. $\varepsilon$ to the graph properties listed in
Theorem~\ref{thmmin-degree}. To this end, recall the well-known facts
(see~\cite{Bollobas,JLR,BHKL})
that each such property $(g_n)$ is asymptotically equal to the property
$(f_n)$ of having minimum degree at least $k$ (for an appropriate $k$),
in the sense that $\lim_{n\to\infty}\P(f_n\neq g_n)= 0$. It is
elementary that if $(f_n)$ is noise sensitive
(noise stable) and $(g_n)$ is asymptotically equal to $(f_n)$, then
$(g_n)$ is noise sensitive
(noise stable), since
\[
\bigl\vert\E\bigl[f_n\bigl(\omega^\varepsilon
\bigr)f_n(\omega)\bigr]-\E\bigl[g_n\bigl(
\omega^\varepsilon\bigr)g_n(\omega)\bigr]\bigr\vert\le2
\P(f_n\neq g_n),
\]
thus translating the quantitative statements on $(f_n)$ to $(g_n)$, as required.
\end{pf}

%
\begin{remark}
As an alternative way to obtain noise sensitivity for $\cG(n,p)$ having
minimum degree at least $k$, 
one could appeal to~\cite{ScSt}, Theorem~1.8, and present a randomized
algorithm
for this event whose probability of querying any given edge tends to 0.
This would imply a
quantitative noise sensitivity result, albeit weaker than the sharp one
obtained above.
\end{remark}

\section{Noise sensitivity of witness-transitive functions} \label
{secwit-dist}
Let $f$ be a monotone Boolean function on a domain $\Omega$. We say
that $f$ is 1-\emph{witness-transitive} if the set of automorphisms of
$f$ (the set of permutations $\pi$ on $\Omega$ under which $f$ is
invariant, i.e., $f \equiv f\circ\pi$) is such that for any two
witnesses $W,W'\in\cW_1(f)$ there exists an automorphism of $f$ mapping
$W$ to $W'$. That is to say, any two 1-witnesses for $f$ are equivalent.

For instance, the classical examples for noise sensitive functions
which were mentioned in the \hyperref[secIntro]{Introduction}, tribes and recursive
majority, are both 1-witness-transitive, as is the property of
containing an unlabeled copy of a certain graph $H$ in a random graph
$G\sim\cG(n,p)$.

\subsection{A Poissonization tool for strong noise sensitivity}
Our goal in this section is to prove a sufficient condition for strong
noise sensitivity of 1-witness-transitive functions.
This condition will be in the form of a Poisson approximation of the
total number of occurring 1-witnesses, as stated next.

\begin{proposition}\label{proppoisson-WD}
Let $(f_n)$ be a sequence of 1-witness-transitive monotone Boolean
functions. Let $W_\star=W_\star(n)$ be a canonical 1-witness for
$f_n$, and
suppose that $(1-p_n)\vert W_\star\vert\to\infty$ with\vspace
*{1pt} $n$.
Let
$X_n = \sum_{W\in\cW_1(f_n)} \One_{\{\omega_W\equiv1\}}$ count the
occurring 1-witnesses, and assume
that for some $\lambda\in\R_+$, we have
%
%
\begin{eqnarray}
\label{eq-poisson-hypo-moments} \lim_{n\to\infty}\E[X_n]&=&\lambda
\quad
\mbox{and}\quad\lim_{n\to\infty}\Var(X_n)=\lambda,
\\
\label{eq-poisson-hypo-E-minus} \lim_{n\to\infty} \E[ X_n \mid
\omega_{W_\star} \equiv0 ] &=& \lambda.
\end{eqnarray}
Then $X_n\stackrel{\mathrm{d}}\to\Po(\lambda)$ as $n\to\infty$, and
$(f_n)$ is $\NS$ and moreover \SNSv. Furthermore,
quantitative $\NS$ (as well as \SNSv) holds w.r.t. $\varepsilon(n)$
if and
only if
%
%
\begin{equation}
\label{eq-poisson-hypo-epsilon} \varepsilon\gg\bigl[(1-p_n)\vert
W_\star
\vert\bigr]^{-1}.
\end{equation}
\end{proposition}

\begin{pf}
The fact that the $X_n$ converges in distribution to a Poisson random
variable under the given assumptions follows from a standard
application of the Chen--Stein method; see, for example,~\cite
{AGG}, Theorem~1, and~\cite{JLR}, Theorem~6.24. Indeed, writing $I_W =
\One_{\{\omega_W\equiv1\}}$ for $W\in\cW_1$ we see that $\P(I_W) =
p^{\vert W\vert} = o(1)$ thanks to the assumption $(1-p)
\vert W_\star\vert\to\infty$. As these indicators are
positively related by FKG, we can invoke a simplified form of the
Chen--Stein method (see~\cite{JLR}, Theorem~6.24), at which point the
assumptions of~\eqref{eq-poisson-hypo-moments} imply that
\[
\bigl\llVert X_n - \Po(\lambda)\bigr\rrVert_\tv\leq
\frac{\Var(X_n)}{\E[X_n]}-1+2\max_{W\in\cW_1} \P(I_W) = o(1).
\]

Linking the above to strong noise sensitivity will be achieved by the
next key definition, which we phrase for general monotone Boolean
functions (not necessarily witness-transitive) as it may be of
independent interest. The proof of Proposition~\ref{proppoisson-WD}
will be continued after this detour.

\begin{definition} \label{def-1-witness-disjoint}
A sequence $(f_{n})$ of monotone increasing Boolean functions
is said to be 1-\emph{witness-disjoint} if
\[
\lim_{n\to\infty} \max_{W\in\cW_1} \P
\biggl(\mathop{\bigcup_{W' \in\cW_1\setminus\{W\}}}_{W'\cap W
\neq
\varnothing} \{
\omega_{W'}\equiv1\} \Big|\omega_W\equiv1 \biggr) =0.
\]
\end{definition}


Note that the above condition would trivially hold if every pair of
distinct \mbox{1-}witnesses were disjoint (as is the case, e.g., for
the tribes function, where the 1-witnesses are full blocks). In a
sense, Definition~\ref{def-1-witness-disjoint} provides an
approximation to such a situation, which, as we show next, is powerful
enough to imply (quantitative) strong noise sensitivity.


%
\begin{lemma}\label{lemWDtoSNS}
Let $(f_{n})$ be a sequence of monotone Boolean functions that is
1-witness-disjoint. Let $\varepsilon(n)$ be such that $ \varepsilon(1-p_n)
\ell_n\to\infty$ with $n$, where $\ell_n$ is the minimum size of a
1-witness for $f_n$. Then $(f_n)$ is \SNSv\ w.r.t. $\varepsilon$.
\end{lemma}

\begin{pf}
Thanks to our assumption on $\varepsilon$ we have that for any
1-witness $W$,
\[
\P\bigl(\omega^\varepsilon_W\equiv1 \mid
\omega_W\equiv1 \bigr) = \bigl(1-\varepsilon(1-p)\bigr)^{\vert
W\vert}
\leq e^{-\varepsilon(1-p_n)\ell_n} = o(1),
\]
and therefore
%
%
\begin{eqnarray}\label{eq-sns-bound-1}
\P\bigl(f_n\bigl(\omega^{\varepsilon}\bigr)=1\mid
\omega_W\equiv1\bigr)&=& \P\biggl(\bigcup
_{
W' \in\cW_1}\bigl\{ \omega^\varepsilon_{W'}\equiv1
\bigr\} \Big|\omega_W\equiv1 \biggr)
\nonumber
\nonumber\\[-8pt]\\[-8pt]\nonumber
&\le&\P\biggl(\bigcup_{
W' \in\cW_1\setminus\{W\}} \bigl\{
\omega^\varepsilon_{W'}\equiv1\bigr\} \Big|\omega_W
\equiv1 \biggr) +o(1).
\end{eqnarray}
Define the events $A_n$ and $B_n$ by
\begin{eqnarray*}
A_n &=& \mathop{\bigcup_{W' \in\cW_1}}_{W' \cap W = \varnothing}
\bigl\{ \omega^\varepsilon_{W'}\equiv1\bigr\},\qquad
B_n = \mathop{\bigcup_{W' \in\cW_1\setminus\{W\}}}_{W' \cap W \neq
\varnothing}
\bigl\{ \omega^\varepsilon_{W'}\equiv1\bigr\}.
\end{eqnarray*}
Of course,
$\P(A_n \mid\omega_W\equiv1) \leq\P(f_n=1)$ as the events $A_n$ and
$\{\omega_W\equiv1\}$ are mutually independent,
and together with~\eqref{eq-sns-bound-1} this yields
%
%
\begin{equation}
\label{eq-sns-bound-2} \P\bigl(f_n\bigl(\omega^{\varepsilon}\bigr
)=1\mid
\omega_W\equiv1\bigr) - \P(f_n=1 ) \leq\P
(B_n\mid\omega_W\equiv1 ) + o(1).
\end{equation}
Next, since the distribution of $\omega^{\varepsilon}$ conditioned on
$\omega_W \equiv1$ is
stochastically dominated by the distribution of $\omega$ conditioned on
$\omega_W\equiv1$,
\[
\P(B_n\mid\omega_W\equiv1 ) \leq\P\biggl(\mathop{
\bigcup_{W' \in\cW_1\setminus\{W\}}}_{W \cap W' \neq\varnothing}
\{
\omega_{W'}\equiv1\} \Big|\omega_W\equiv1 \biggr).
\]
Now take a supremum over $W\in\cW_1$, under which the final expression
goes to 0 by Definition~\ref{def-1-witness-disjoint}. Combined
with~\eqref{eq-sns-bound-2}, this completes the proof.
\end{pf}

Returning to the proof of Proposition~\ref{proppoisson-WD}, we claim
that under the hypotheses $\E X_n \to\lambda$ and $\E[X_n\mid\omega
_{W_\star}\equiv0] \to\lambda$ given there, the extra assumption
\mbox{$\Var
(X_n)\to\lambda$} in~\eqref{eq-poisson-hypo-moments} is equivalent
to having
%
%
\begin{equation}
\label{eq-Hn-wit-disjoint} \lim_{n\to\infty} \mathop{\sum
_{W \in\cW_1\setminus\{W_\star\}}}_{W\cap W_\star\neq\varnothing
}\P(
\omega_{W}\equiv1\mid
\omega_{W_\star}\equiv1 ) =0.
\end{equation}
As per Definition~\ref{def-1-witness-disjoint}, this would imply
(thanks to the witness-transitivity) that $(f_n)$ is
1-witness-disjoint, and in light of Lemma~\ref{lemWDtoSNS} we will
thereafter arrive at strong noise sensitivity w.r.t. $\varepsilon$
assuming $\varepsilon\gg[(1-p_n)\vert W_\star\vert]^{-1}$.
Indeed, this equivalence is seen by expanding $\E X_n^2=\E X_n + \Gamma
+ \Delta$ where
\begin{eqnarray*}
\Gamma&=& \mathop{\sum_{W, W' \in\cW_1}}_{W'\cap W = \varnothing}
\P(
\omega_W\equiv1, \omega_{W'}\equiv1 ),\qquad\Delta
= \mathop{\sum_{W\neq W' \in\cW_1}}_{W'\cap W \neq\varnothing} \P(
\omega_W\equiv1, \omega_{W'}\equiv1 ).
\end{eqnarray*}
The expression for $\Gamma$, which is clearly at most $(\E X_n)^2$, can
be rewritten by virtue of the
independence of $W,W'$ and the witness-transitivity as
\[
\sum_{W\in\cW_1}\P(\omega_W\equiv1)
\mathop{\sum_{W'\in\cW_1}}_{W\cap W' = \varnothing}\P(
\omega_{W'}\equiv1) = \E[X_n] \E[X_n\mid
\omega_{W_\star}\equiv0],
\]
which is at least $(1-o(1))\lambda^2$ by the aforementioned hypotheses.
At this point, $\Var(X_n)\to\lambda$ if and only if $\Delta\to0$, and
yet by the witness-transitivity,
\[
\Delta= \E[X_n] \mathop{\sum_{W\in\cW_1 \setminus\{W_\star\}
}}_{W\cap
W_\star\neq\varnothing}
\P(\omega_{W}\equiv1\mid\omega_{W_\star} \equiv1 ).
\]
This completes the argument for \SNSv\ whenever $\varepsilon\gg
[(1-p_n)\vert W_\star\vert]^{-1}$.

In the regime $\varepsilon\lesssim[(1-p_n)\vert W_\star\vert
]^{-1}$, the sequence $(f_n)$ will not be $\NS$, by the same Fourier
argument given in the previous section: as before, $\E[\vert\cP
_n\vert\mid f_n = 1] \leq\vert W_\star\vert$ since we can
take an arbitrary witness $W$ that occurs in a configuration for which
$f_n=1$ and note that every pivotal edge must then belong to $W$. It
then follows that $\E\vert\cS_n\vert\leq(1-p_n)\vert W_\star
\vert$,
thus for $\varepsilon\lesssim[(1-p_n)\vert W_\star\vert]^{-1}$ we
have $\liminf_{n\to\infty}\Cov(f_n(\omega),f_n(\omega^\varepsilon
))>0$ due
to the Fourier levels $0<\vert\cS_n\vert< M (1-p_n)\vert
W_\star\vert$ for a suitable constant $M>0$.
\end{pf}

\begin{example*}[(Tribes)]
We have seen in the previous section that the tribes function is \SNSv\
by a direct analysis of $\P(f_n(\omega^\varepsilon)\mid\omega
_W\equiv1)
- \P(f_n=1)$. We will now derive this fact via an immediate application
of Proposition~\ref{proppoisson-WD}. Let $m = \log_2n - \log_2\log_2
n$ denote the block size in $f_n$ (as usual, divisibility issues can be
solved by ignoring one exceptional block; we omit floors and ceilings
for brevity), and note that a canonical 1-witness $W_\star$ consists of
a full block, and so $(1-p_n)\vert W_\star\vert\asymp m \to\infty
$. Moreover, $X_n$ is simply a $\Bin(n/m, 2^{-m})$ random variable.
Thus both $\E[X_n] \to1$ and $\Var(X_n)\to1$ as $n\to\infty$, while
under the conditioning $\omega_{W_\star}\equiv0$, the variable $X_n$
becomes a $\Bin(n/m-1,2^{-m})$ variable, whose mean again converges to
$1$ as $n\to\infty$. The conditions of Proposition~\ref
{proppoisson-WD} are thus met, yielding that $(f_n)$ is \SNSv.
Furthermore, it is such if and only if $\varepsilon\gg1/m$ while it is
not $\NS$ for $\varepsilon=O(1/m)$.
\end{example*}

%
\begin{remark}\label{rem-poisson-WD-no-limits}
It is easily seen from the proof of the above proposition that in order
to conclude (quantitative) strong noise sensitivity without making any
claim on the limiting distribution of $X_n$, conditions~\eqref
{eq-poisson-hypo-moments} and~\eqref{eq-poisson-hypo-E-minus} may be
replaced by
%
%
\begin{eqnarray}
\label{eq-poisson-weak-hypo-E} &\displaystyle0 < \liminf_{n\to
\infty}\E[X_n] \leq
\limsup_{n\to\infty} \E[X_n] < \infty,&
\\
\label{eq-poisson-weak-hypo-var} &\displaystyle\lim_{n\to\infty}
\bigl\vert\Var(X_n)
- \E[X_n]\bigr\vert= 0,&
\\
\label{eq-poisson-weak-hypo-E-minus} &\displaystyle\lim_{n\to\infty
} \bigl\vert\E[X_n]
- \E[ X_n \mid\omega_{W_\star} \equiv0 ]\bigr\vert= 0.&
\end{eqnarray}
Under these assumptions, $(f_n)$ is nondegenerate thanks to FKG
[bounding $\P(X = 0)$ away from 0] and Cauchy--Schwarz [bounding $\P
(X>0)$ away from 0] as in the proof of Theorem~\ref
{thmcube-root-quant}. Following the proof of Proposition~\ref
{proppoisson-WD} we see that, as $\E[X_n]=O(1)$, conditions~\eqref
{eq-poisson-weak-hypo-var} and~\eqref{eq-poisson-weak-hypo-E-minus}
yield $\Delta\to0$, from which point the original argument completes
the proof.
\end{remark}

As an immediate corollary of the results proved above, we get the
following sufficient condition for strong noise sensitivity of
containing an unlabeled copy of a graph in the Erd\H{o}s--R\'enyi
random graph.

\begin{corollary}\label{corsubgraph-WD}
Let $G\sim\cG(n,p)$, and let $H_n$ be a graph with $k\ll\sqrt{n}$
vertices and $\ell\gg1/(1-p)$ edges. Let $f_n=\One_{\{X_n>0\}}$ where
$X_n$ counts the number of unlabeled copies of $H_n$ in $G$, and
suppose that 
\begin{eqnarray*}
&\displaystyle0<\liminf_{n\to\infty} \E[X_n] \leq
\limsup_{n\to\infty} \E[X_n] <\infty,&
\\
&\displaystyle\lim_{n\to\infty} \bigl\vert\Var(X_n)- \E
[X_n]\bigr\vert= 0.&
\end{eqnarray*}
%
Then $(f_n)$ is $\NS$ and moreover \SNSv. Furthermore, quantitative\break 
\SNSv\ holds
if $\varepsilon\gg[(1-p)\ell]^{-1}$, and otherwise $(f_n)$ is not
$\NS
$ w.r.t. $\varepsilon$.
\end{corollary}

\begin{pf}
Appealing to Proposition~\ref{proppoisson-WD}, with the canonical
witness $W_\star$ being a copy of $H_n$, we see that~\eqref
{eq-poisson-weak-hypo-E}, \eqref{eq-poisson-weak-hypo-var} and the fact
that $(1-p_n)\vert W_\star\vert\to\infty$ are explicitly
assumed. For~\eqref{eq-poisson-weak-hypo-E-minus}, the final condition
in Remark~\ref{rem-poisson-WD-no-limits}, note that $\E[X_n] = \binom
{n}k p^\ell k!/ \aut(H_n)$ where $\aut(H_n)$ is the size of the
automorphism group of $H_n$, while $\E[X_n \mid\omega_{W_\star
}\equiv
0] \geq\binom{n-k}k p^\ell k!/ \aut(H_n) \sim\E[X_n]$ thanks to the
hypothesis that $k\ll\sqrt{n}$, as desired.
\end{pf}


\subsection{Noise sensitivity for cliques}
This section is devoted to the noise sensitivity of cliques of any size
$1\ll k_n=n^{o(1)}$ in the random\vspace*{1pt} graph $\cG(n,p)$,
corresponding to
the maximum cliques for $n^{-o(1)} \leq p \leq1-n^{-o(1)}$.
\begin{pf*}{Proof of Theorem~\ref{thmclique}}
The statement of the theorem will follow from Corollary~\ref
{corsubgraph-WD} via the standard second moment analysis which implies
the 2-point concentration of the clique number $k_n$ of $\cG(n,1/2)$,
generalized to the case of $1 \ll k_n = n^{o(1)}$. An outline of this
second moment calculation for $p=1/2$ is given in~\cite{AS,Bollobas},
and here we provide the full details for the sake of completeness.

Let $X_{k}=X_k(n)$ count the number of cliques of size $k=k_n$ in
$G\sim\cG(n,p)$, and
note that
$ \E X_{k} = \binom{n}k p^{\binom{k}2}$
can be assumed to be bounded away from 0, as otherwise $\P
(X_k=0)=1-o(1)$ and so the sequences $k_n,p_n$ would correspond to a
degenerate sequence $(f_n)$ countering the hypothesis of the theorem.

In order to estimate the variance of $X_k$, as usual write $\Var(X_k)
\leq\E X_k + \Delta$ for $\Delta= \sum_{H_1,H_2} \P(H_1\subset G,
H_2\subset G)$, where the summation runs over all pairs of potential
$k$-cliques $H_1\neq H_2$ that have some edges in common.
We claim that the required result would follow from showing that
%
%
\begin{equation}
\label{eq-Delta-o-mean-square} \Delta= o \bigl( (\E X_k)^2 \bigr).
\end{equation}
Indeed, suppose that $\E X_k \to\infty$ with $n$. In this case~\eqref
{eq-Delta-o-mean-square} implies that $\Var(X_k)\ll(\E X_k)^2$. Thus
by Chebyshev's inequality, $X_k$ concentrates about its mean and in
particular $\P(X_k>0)=1-o(1)$, contradicting the hypothesis that
$(f_n)$ is nondegenerate. We thus have that $\E X_k $ is bounded away
from $0$ and $\infty$ for any sufficiently large $n$, and a closer look
at $\E X_k\sim( n p^{(k-1)/2} )^k/k!$ reveals that this can only occur if
%
%
\begin{equation}
\label{eq-pc-clique} p = n^{-(2+o(1))/k}.
\end{equation}
Hence, either $k=O(\log n)$, in which case $p$ is bounded away from $1$
and in particular the number of edges $\ell=\binom{k}2$ satisfies
$\ell
\gg1/(1-p)$, or we have $k \gg\log n$, and then $(1-p)^{-1} =
O(k/\log n) = o(k^2)$, again satisfying the condition $\ell\gg
1/(1-p)$ in Corollary~\ref{corsubgraph-WD}. Finally, it follows
from~\eqref{eq-Delta-o-mean-square} that $\vert\E[X_k] - \Var
(X_k)\vert\to0$ and the mentioned corollary now provides the
required statement on the strong noise sensitivity of $(f_n)$.
Furthermore, we obtain that quantitative (strong) noise sensitivity
holds if and only if $\varepsilon\gg[(1-p)k^2]^{-1}$.

A classical fact worth reiterating is that for $p $ as given in~\eqref
{eq-pc-clique}, and writing $\psi_j=\E[ X_{j+1}]/\E[X_j] $, one has
$\psi_j = p^j (n-j)/(j+1)$. Thus the map $j\mapsto\E X_j$ (starting at
$\E X_1=n$) is unimodal, and for $j\sim k$ it satisfies that $\psi_j =
n^{-1+o(1)}$. By the discussion above, this yields the 2-point
concentration of the clique number, and moreover a 1-point
concentration except for those rare values of $n$ when, for example,
the first $\E X_j$ to drop below $1$ (say) is still bounded away from
0. These are precisely the nondegenerate cases.

To obtain~\eqref{eq-Delta-o-mean-square}, one breaks $\Delta$ down into
$\Delta= \sum_{i=2}^{k-1}\Delta_i$ according to $i$, the number of
common vertices between $H_1,H_2$ (at least 2 to accommodate a common
edge and less than $k$ to keep the cliques distinct), obtaining that
\[
\Delta_i = \binomm{n}k \binomm{k}i \binomm{n-k} {k-i} p^{2\binom
{k}2 -
\binom{i}2}.
\]
Fix any arbitrary $0<\delta<\frac{1}2$, and let
\begin{eqnarray*}
\alpha&:=& (1+\delta)\frac{\log n}{\log(1/p)},\qquad\beta:=
(2-\delta)
\frac{\log(n/k^2)}{\log(1/p)},
\end{eqnarray*}
noting that $\alpha<\beta$ for large enough $n$ since $k=n^{o(1)}$. It
is now easy to see that for any $i \leq\beta$ we have
\[
\frac{\Delta_i}{(\E X_k)^2} = \frac{\binom{k}i \binom
{n-k}{k-i}}{\binom
{n}k p^{\binom{i}2} } \leq\frac{1+o(1)}{i!} \biggl[
\frac{k^2}{n p^{(i-1)/2}} \biggr]^i \leq\frac{1+o(1)}{i!} \biggl(
\frac{k^2}n \biggr)^{\delta i/2},
\]
where the first inequality holds for $k\ll\sqrt{n}$ and the second one
for $i\leq\beta$.
It then follows that
\[
\sum_{2\leq i \leq\beta} \frac{\Delta_i}{(\E X_k)^2} \leq
n^{-\delta
+ o(1)}
= o(1),
\]
and we now proceed to handle the remaining $\Delta_i$'s (with some
overlap). Since $\E X_k$ is bounded away from 0, we see that for any
$\alpha\leq i < k$,
\[
\frac{\Delta_i}{(\E X_k)^2} \lesssim\frac{\Delta_i}{\E X_k} =
\binomm
{k}i \binomm{n-k} {k-i}
p^{\binom{k}2 - \binom{i}2} \leq\frac{ (k(n-k)p^i )^{k-i}}{ ((k-i)!
)^2} \leq\bigl(k n^{-\delta}
\bigr)^{k-i},
\]
with the last inequality stemming from the fact that $i\geq\alpha$. In
particular,
\[
\sum_{\alpha\leq i \leq k-1} \frac{\Delta_i}{(\E X_k)^2} \leq
n^{-\delta+ o(1)}
= o(1),
\]
and as $\alpha< \beta$ this establishes~\eqref
{eq-Delta-o-mean-square}, completing the proof.
\end{pf*}

In the special case where the sequence of probabilities $p(n)$ is such
that $\E[X_k]\to\lambda$ for some fixed $\lambda>0$ [i.e., $\binom
{n}kp^{\binom{k}2}$ converges], the above proof further gives (via the
Chen--Stein method, as in the proof of Proposition~\ref
{proppoisson-WD}) that $X_k\stackrel{\mathrm{d}}\to\Po(\lambda)$.
However, a Poisson limit for the number of copies of a graph is not a
necessary condition for \SNSv, as the next remark shows.

%
\begin{remark}[(Disjoint union of two cliques)]
Consider the property $f_n$ of containing a disjoint union of two
cliques $K_k\cup K_k$ when the clique size $1\ll k\ll n^{o(1)}$ is
exactly such that the probability of witnessing a single such clique in
$G\sim\cG(n,p)$ is nondegenerate. We claim that containing this graph,
which we note is balanced but not strictly balanced, is \SNSv\ despite
the fact that the corresponding number of copies of this graph is not
asymptotically Poisson, nor is this property 1-witness-disjoint.
Indeed, one easily sees that the condition in Definition~\ref
{def-1-witness-disjoint} fails since upon conditioning on two
disjoint cliques $H'$ and $H''$ (which together form a 1-witness for
$f_n$), there exists a third clique $\tilde H$, disjoint from $H'$ and
$H''$, with probability bounded away from 0 (in which case
$\tilde{H}\cup H'$, e.g., would be a 1-witness nontrivially
intersecting $H'\cup H''$).

In order to establish \SNSv\ for this property, we modify the second
moment calculation in the proof of Theorem~\ref{thmclique} as follows.
Letting $\cF$ denote all potential copies of a single clique $K_k$ in
$G$, take $H',H''\in\cF$ to be two disjoint such copies, arbitrarily
chosen, and define
\[
\Delta_{i,j}:= \mathop{\mathop{\sum_{H\in\cF}}_{\vert V(H)\cap
V(H')\vert
=i}}_{\vert V(H)\cap V(H'')\vert=j}
\P\bigl(H\subset G \mid H',H''\subset
G \bigr),
\]
whence
\[
\Delta_{i,j} = \binomm{n-2k} {k-(i+j)}\binomm{k}i\binomm{k}j
p^{\binom{k}2 - \binom{i}2-\binom{j}2}.
\]
As usual, the probability of encountering a copy of $K_k\cup K_k$ that
does not intersect neither $H'$ nor $H''$ is at most $\P(f_n=1)$, while
the probability of encountering even a single $K_k$ that intersects
$H'$ but not $H''$, conditioned on $H',H''\subset G$, was shown in the
proof of Theorem~\ref{thmclique} to tend to 0. Hence, it remains to
show that $\sum_{2\leq i,j < k}\Delta_{i,j}=o(1)$. The case where
%
%
\begin{equation}
\label{eq-i+j-small} i+j \leq(2-\delta)\frac{\log n}{\log(1/p)}
\end{equation}
for some small $\delta>0$ is treated as in the proof of Theorem~\ref
{thmclique} by writing
\begin{eqnarray*}
\frac{\Delta_{i,j}}{\binom{n}k p^{\binom{k}2}} &\lesssim&\biggl
[\frac
{k^2}{n p^{(\binom{i}2 + \binom{j}2)/(i+j)}} \biggr]^{i+j}
\leq\biggl[\frac{k^2}{n p^{(i+j)/2}} \biggr]^{i+j} \leq\biggl(
\frac{k^{4/\delta}}n \biggr)^{\delta(i+j)/2},
\end{eqnarray*}
which is at most $n^{-2\delta+o(1)}$ by the assumption $i,j\geq2$.
[Note the usage of~\eqref{eq-i+j-small} for the last inequality.]
The complement range for~\eqref{eq-i+j-small} is handled in the
following way. Without loss of generality, assume $i\geq j$, and using
the fact that $\binom{k}2 - \binom{i}2-\binom{j}2 \geq
(k-(i+j))(i+j) +
ij$ we can infer that
\begin{eqnarray*}
\Delta_{i,j} &\leq&\biggl(\frac{e(n-2k)}{(k-(i+j)) \vee1} k p^{i+j}
\biggr)^{k-(i+j)} \bigl( k^{2}p^i \bigr)^j.
\end{eqnarray*}
The first term on the right-hand side is at most $n^{(-1+\delta
+o(1))(k-(i+j))}$ by the assumption on $i+j$, whereas the second term
is at most $n^{(-1+\delta/2+o(1))j}$, which in turn is at most
$n^{-2+\delta+o(1)}$ thanks to the fact that $j\geq2$. Summing these
over $2\leq i,j < k$ now leads to the conclusion that $(f_n)$ is \SNSv.
\end{remark}

\subsection{Proof of Theorem~\texorpdfstring{\protect\ref{thm-strictly-balanced}}{1.6}, part~(1)}

This part of the theorem is a simple consequence of Corollary~\ref
{corsubgraph-WD} via an elegant Poisson approximation argument of
Bollob\'as~\cite{Bollobas}, Theorems~4.1 and~4.3. We include the proof
for completeness.

\begin{lemma}
Let $H_n$ be a strictly balanced graph with $\ell_n \leq\sqrt{\frac
{\log n}{\log\log n}}$ edges, and let $X_n$ count its number of copies
in $G\sim\cG(n,p)$ for $p=p(n)$ such that
\[
0<\liminf_{n\to\infty} \E[X_n]\le\limsup
_{n\to\infty} \E[X_n]<\infty.
\]
Then
%
%
\begin{equation}
\label{eq-strict-balanced-second-moment1} \lim_{n\to\infty} \bigl
(\Var(X_n)-
\E[X_n] \bigr)=0.
\end{equation}
\end{lemma}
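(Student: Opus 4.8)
The plan is to show that the second-moment overcount $\Delta := \Var(X_n) - \E[X_n] + (\E[X_n] - (\E[X_n])^2) + \cdots$, or more precisely the quantity $\Delta = \sum_{H' \neq H''} \P(H' \subset G,\, H'' \subset G) - $ (independent contribution), vanishes. The cleanest route follows Bollob\'as: write $\Var(X_n) \leq \E[X_n] + \Delta$ where $\Delta = \sum \P(H_1\subset G,\, H_2\subset G)$ over ordered pairs of distinct copies sharing at least one edge, and reduce the claim~\eqref{eq-strict-balanced-second-moment1} to proving $\Delta \to 0$ (the reverse inequality, $\Var(X_n) \geq \E[X_n] - o(1)$, is immediate since the indicators $\One_{\{H\subset G\}}$ are positively correlated by FKG, or simply because $\Var(X_n) = \E[X_n](1-p^{\ell_n}) + \Delta \geq \E[X_n](1-o(1))$ as $p^{\ell_n} = \E[X_n]/\binom{n}{k}k!\cdot\aut(H_n) = o(1)$ since $k$ grows). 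So the whole content is the bound $\Delta = o(1)$.

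To estimate $\Delta$, I would stratify over the ``overlap graph'' $F \subseteq H_n$ — the isomorphism type of the subgraph on which the two copies $H_1, H_2$ coincide — and over the number $a = v(F)$ of shared vertices and $b = e(F)$ of shared edges, with $b \geq 1$. For a fixed such overlap type, the number of ordered pairs of copies of $H_n$ overlapping in a copy of $F$ is at most (number of copies of $H_n$) $\times$ (number of ways to place the second copy meeting the first in $F$) $\asymp \E[X_n] \cdot n^{k-a} p^{\ell_n - b}$, so the contribution is of order $\E[X_n]^2 \cdot n^{-a} p^{-b}$ up to constants depending on $k$ and the automorphism counts. Using $\E[X_n] \asymp 1$, i.e. $n^k p^{\ell_n} \asymp k!/\aut(H_n)$, and hence $p \asymp (c_n / n^k)^{1/\ell_n}$ with $c_n = k!/\aut(H_n)$, the exponent of $n$ in each term becomes $-a + (k b / \ell_n) + (\text{error from } c_n^{b/\ell_n})$. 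The strict balance of $H_n$ enters exactly here: for any proper subgraph $F$ with $b \geq 1$ edges one has $b / a < \ell_n / k$ strictly, i.e. $k b / \ell_n < a$, so each term carries a strictly negative power of $n$. One must be careful that this power is negative by a margin that beats both the number of overlap types (at most $2^{\binom{k}{2}}$, crude) and the factor $c_n^{\pm b/\ell_n}$ — this is where the hypothesis $\ell_n \leq \sqrt{\log n / \log\log n}$ is used, since it forces $k \lesssim \ell_n = o(\sqrt{\log n})$ (as $H_n$ has no isolated vertices, $k \leq 2\ell_n$; actually for strictly balanced $H_n$ one even gets $k = O(\ell_n)$ trivially), so $c_n^{1/\ell_n} = (k!/\aut H_n)^{1/\ell_n} \leq (k!)^{1/\ell_n} = n^{o(1)}$ and $2^{\binom{k}{2}} = n^{o(1)}$ as well. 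Thus $\Delta \leq n^{o(1)} \cdot \max_F n^{-(a - kb/\ell_n)}$, and it remains to check the worst-case exponent is still bounded away from $0$.

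The main obstacle, then, is quantifying the strict balance: \emph{a priori} ``$b/a < \ell_n/k$ strictly'' gives a gap that could shrink with $n$ and be swallowed by the $n^{o(1)}$ slack. To handle this I would argue that since $F$ ranges over (isomorphism types of) subgraphs of a graph on $k \leq 2\ell_n$ vertices, the ratios $b/a$ take values in a set of size at most $\mathrm{poly}(\ell_n)$ with denominators $\leq k$, so the gap $\ell_n/k - b/a$, when positive, is at least $\sim 1/(k \ell_n) \gtrsim 1/\ell_n^2$; hence the worst exponent of $n$ in $\Delta$ is at most $-c\,a/\ell_n^2 \leq -c/\ell_n^2$ times a $\log n$, i.e. $n^{-\Omega(\log n/\ell_n^2)}$. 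Since $\ell_n^2 \leq \log n/\log\log n$, this is at most $n^{-\Omega(\log\log n)} = (\log n)^{-\Omega(1)}$, which comfortably dominates the $n^{o(1)} = e^{o(\log n)}$ slack coming from $2^{\binom k2}$ and $c_n^{\pm b/\ell_n}$ — one should double-check that ``$o(\log n)$'' here is genuinely $o(\log n)$ and not merely $O(\log n)$, but $\binom{k}{2} \leq 2\ell_n^2 \leq 2\log n/\log\log n = o(\log n)$ and $\log(k!)/\ell_n \leq \log(k!) = O(\ell_n \log \ell_n) = o(\log n)$ both hold. Assembling these estimates gives $\Delta = o(1)$, hence~\eqref{eq-strict-balanced-second-moment1}, and feeding this into Corollary~\ref{cor:subgraph-WD} yields the \SNS\ and quantitative statements of Theorem~\ref{thm-strictly-balanced}, Part~\ref{it-strictly-balanced-1}.
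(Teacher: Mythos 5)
Your overall strategy — reduce to bounding the overlap contribution $\Delta$, exploit integrality together with the strict-balance inequality to get a strictly negative exponent of $n$, and push the resulting estimate through — is the same as the paper's. However, the final comparison as you wrote it is false and the argument does not close.

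You stratify $\Delta$ over isomorphism types of the overlap $F\subseteq H_n$, count those types by the crude $2^{\binom{k}{2}}$, and fold this (together with the embedding and $c_n^{\pm b/\ell_n}$ factors) into an ``$n^{o(1)}$ slack'' which you then assert is ``comfortably dominated'' by the decay $(\log n)^{-\Omega(1)}$. That last step is wrong. The slack $2^{\binom{k}{2}}$ is $\exp(\Theta(\ell_n^2))$, which at the top of the allowed range $\ell_n^2\approx\log n/\log\log n$ is $\exp(\Theta(\log n/\log\log n))$: technically $n^{o(1)}$, but astronomically larger than $(\log n)^{\Omega(1)}=\exp(\Theta(\log\log n))$, so the product $\to\infty$. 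Even if you tighten the exponent gap from your $c/\ell_n^2$ to the optimal $1/\ell_n$ (via $a\ell_n - kb\in\Z_{\ge 1}$, giving $a-kb/\ell_n\ge 1/\ell_n$), the decay is only $n^{-1/\ell_n}=\exp(-\Theta(\sqrt{\log n\log\log n}))$, and $\log n/\log\log n\gg\sqrt{\log n\log\log n}$, so the $2^{\binom{k}{2}}$ factor still swamps it. The hypothesis $\ell_n\le\sqrt{\log n/\log\log n}$ is calibrated almost exactly to the true slack; there is no room for a factor of size $\exp(\Theta(\ell_n^2))$.

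The paper sidesteps this by never summing over isomorphism types of overlaps. Instead it sums over $s=|V(H_1)\cup V(H_2)|$, which ranges over only $k$ values ($k\le s\le 2k-1$), and bounds the number of ordered pairs of copies on $s$ labeled vertices by $\bigl(\binom{s}{k}k!/\aut(H_n)\bigr)^2$. After substituting $p\asymp(\aut(H_n)\E X_n/n^k)^{1/\ell_n}$ and cancelling, the slack is only $(\ell_n+1)(2\ell_n+2)!=\exp(O(\ell_n\log\ell_n))$, against the decay $n^{-1/\ell_n}$. The required inequality is then essentially $2\ell_n^2\log\ell_n<\log n$, which under the hypothesis holds by a margin of about $\log n\cdot\tfrac{\log\log\log n}{\log\log n}$ — tight, but positive. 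Your integrality observation on the edge counts is exactly what the paper uses (in the form ``edges of $H''$ outside the intersection exceed $t\ell/k$, hence $\ge t\ell/k+1/k$''); the missing ingredient is the much more economical combinatorial bookkeeping.
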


\begin{pf}
Denote the number of vertices and edges of $H_n$ by $k$ and $\ell$, and
let $\cF$ denote the set of all potential copies of $H_n$ in $G\sim
\cG(n,p)$.
As before, we break up the second moment of $X_n$ into
\begin{eqnarray*}
\E\bigl[X^2_n\bigr]
&=& \E[X_n]
+\mathop{\sum_{H'\neq H''\in\cF}}_{H'\cap H''= \varnothing} \P
\bigl(H',H''\subset G \bigr) + \mathop{
\sum_{H'\neq H''\in\cF}}_{H\cap H''\neq\varnothing} \P\bigl(H',H''
\subset G \bigr)
\\
&\leq&\E[X_n] + \bigl(1-o(1)\bigr) \bigl(\E[X_n]
\bigr)^2 + \mathop{\sum_{H'\neq H''\in\cF}}_{H\cap H''\neq
\varnothing}
\P\bigl(H', H''\subset G \bigr),
\end{eqnarray*}
where the inequality between the lines used the fact that $k \ll\sqrt
{n}$ as well as the assumption that $\E[X_n]$ is bounded away from 0
and $\infty$, as in the proof of Proposition~\ref{proppoisson-WD}. We
will show below that the summation in the right-hand side is $o(1)$,
which will then imply~\eqref{eq-strict-balanced-second-moment1}.


Given $H'$ and $H''$ whose vertices overlap, put $t = \vert\{v\in
V(H'')\setminus V(H')\}\vert$, whence $0\le t < k$. (Observe that
$t=0$ is possible since
$H'$ and $H''$ can correspond to different copies of $H_n$ even if
their vertex sets are the same.)
The number of vertices in $H'\cap H''$ is therefore $k-t$.

Assume for the moment that $t>0$.
Since $H_n$ is strictly balanced, it follows
that the number of edges of $H''$ between vertices in $V(H')\cap
V(H'')$ is strictly less than
$(k-t)\ell/k$. Thus the number of edges in $H''$ with at least one
endpoint not in
$V(H')\cap V(H'')$ is strictly more than $\ell-(k-t)\ell/k=t\ell/k$.
Since the number of such edges is an integer, there are in fact at
least $t\ell/k+ 1/k$
such edges; hence the number of edges in $H'\cup H''$ is at least
$
\ell+\frac{t\ell+1}{k}
$.
Now, if $t=0$, the number of edges in $H'\cup H''$ is at least $\ell
+1$ (since
$H'\neq H''$). Altogether, this number is always at least
$\ell+(t\ell+1)/k$.

It is easy to see that the third summand is at most
\[
\sum_{s=k}^{2k-1} \binomm{n} {s} \biggl(
\binomm{s} {k}\frac{k!}{a} \biggr)^2 p^{(s\ell+1)/k},
\]
where $a$ denotes the size of the automorphism group of $H_n$, and $s$
corresponds to $k+t$. The last sum is at most
%
%
\begin{equation}
\label{RoshHashana} \sum_{s=k}^{2k-1}
\frac{n^s}{s!} \biggl(\frac{s!}{a} \biggr)^2
p^{(s\ell+1)/k}.
\end{equation}

Note now that
\[
\E[X_n]=\binomm{n} {k}\frac{k!}{a} p^{\ell}=
\bigl(1+o(1)\bigr)\frac{n^kp^\ell}{a}
\]
since $k\ll\sqrt{n}$. It follows that
\[
p=\frac{(a\E[X_n])^{1/\ell}}{n^{k/\ell}}\bigl(1+o(1)\bigr
)^{1/\ell}.
\]
Substituting this back into~\eqref{RoshHashana} yields that the third
sum that we are interested in
is at most
\[
\bigl(1+o(1)\bigr) \sum_{s=k}^{2k-1}
\frac{1}{s!} \biggl(\frac{s!}{a} \biggr)^2 \bigl(a
\E[X_n]\bigr)^{ (s +\ell^{-1} )/k} \frac{1}{n^{1/\ell}}.
\]
Since $a\ge1$ and $s/k + (\ell k)^{-1}\le2$, the above sum is at most
\[
\bigl(1+o(1)\bigr) k \bigl(\E[X_n]^2 \vee1 \bigr)
(2k)! \frac{1}{n^{1/\ell}}.
\]
Since $k \leq\ell+1$, this is at most
\[
\bigl(1+o(1)\bigr) (\ell+1) \bigl(\E[X_n]^2 \vee1
\bigr)\frac{(2\ell+2)!}{n^{1/\ell}}.
\]
It is easy to verify, using the fact that $\E[X_n]$ is bounded away
from 0 and $\infty$ and that
$\ell\le\sqrt{\frac{\log n}{\log\log n}}$, that this last term is
$o(1)$, as desired.
\end{pf}

\subsection{Proof of Theorem~\texorpdfstring{\protect\ref{thm-strictly-balanced}}{1.6}, part~(2)}

Consider $G\sim\cG(n,\lambda/n)$ for some large enough fixed
$\lambda
>1$, and let
$H_n$ be the graph comprised of two triangles connected by a path of length
%
%
\begin{equation}
\label{eq-path-length} r_n = \bigl\lfloor\tfrac{3}2
\log_\lambda n\bigr\rfloor.
\end{equation}
[Any choice of $(1+\delta)\log_\lambda n \leq r_n \leq(2-\delta
)\log
_\lambda n$ would be valid, as will later become evident; we consider
this particular $r_n$ to simplify the presentation.]
It is easy to see that $H_n$ is strictly balanced.
That $\One_{\{H_n\subset G\}}$ is not $\NS$ will follow from the next
two propositions which may be of independent interest.

%
\begin{proposition}
\label{proptriangles-C1-not-NS}
Let $G\sim\cG(n,p)$ for $p=\lambda/n$ with $\lambda\geq4$ fixed, and
let $\cC_1$ be the largest component of $G$.
Define the event
%
%
\begin{equation}
\label{eq-Delta-k-def} \Delta_k = \{ \cC_1 \mbox{ contains at
least $k$ triangles} \}.
\end{equation}
For any fixed $k\geq1$, the function $\One_{\Delta_k}$ is
nondegenerate and not $\NS$.
\end{proposition}

%
\begin{proposition}
\label{proptriangles-C1-paths}
Let $G\sim\cG(n,p)$ for $p=\lambda/n$ where $\lambda>1$ is some large
enough constant, and let $\cC_1$ denote the largest component of $G$.
W.h.p., every pair of triangles in $\cC_1$ is connected by a simple
path of length $r_n=\lfloor\frac{3}2 \log_\lambda n\rfloor$.

Consequently, $\P(H_n\subset G) = \P(\Delta_2)+o(1)$ where $\Delta_2$
is as in~\eqref{eq-Delta-k-def}.
\end{proposition}

Indeed, Proposition~\ref{proptriangles-C1-not-NS} will follow from
showing that the giant component is, in a sense, robust under the noise
operator, hence; for instance, triangles in $\cC_1$ are likely to
remain in the new largest component. The conclusion of Proposition~\ref
{proptriangles-C1-paths} that the properties $\{H_n\subset G\}$ and
$\Delta_2$ are equivalent up to a negligible probability (together with
their nondegeneracy at the given $p=\lambda/n$) will then preclude the
noise sensitivity of $\One_{\{H_n\subset G\}}$.

Our proofs will exploit the well-known fact that the
breadth-first-search exploration process of the component of a given
vertex is well approximated [up to depth $c\log n$ for a suitable
$c(\lambda)$] by a $\Po(\lambda)$-Galton--Watson tree (a supercritical
branching process in our setting), whence belonging to the giant
component would correspond to the survival of this branching process.
Further set $\lambda_\star< 1$ to be the reciprocal of $\lambda$ in that
\[
\lambda e^{-\lambda} = \lambda_\star e^{-\lambda_\star}.
\]
It is known that $\lambda_\star$ equals the probability that,
conditioned on the survival of the branching process, the number of
surviving children of the root is $1$.

\begin{pf*}{Proof of Proposition~\ref{proptriangles-C1-not-NS}}
Let $\{v_1,\ldots,v_n\}$ be the vertices of $G$ arbitrarily ordered,
let $V' = \{v_i\dvtx i \leq\lceil n/10\rceil\}$ and let $G'$ be the
induced subgraph of $G$ on $V'$.
Denoting by $Y$ the number of triangles in $G'$, we note that, as
$G'\sim\cG(n',p')$ with $p'=\lambda/n\sim\lambda/(10n')$ for
$n'=\vert V'\vert$, it is well known [and also follows from the
second moment analysis in the proof\vspace*{1pt} of part~(1) of Theorem~\ref{thm-strictly-balanced}] that
$Y\stackrel{\mathrm{d}}\to\Po(\hat{\lambda})$ for some $\hat
{\lambda
}>0$ fixed (namely, $\hat{\lambda}= \lambda^3 / 6000$).

Next, write $V''= \{ v_i\dvtx i > \lceil n/10 \rceil\}$ and for each
vertex $x\in V'$ let $G''_x$ be the induced subgraph on $V'' \cup\{x\}
$. Further let $\Gamma_t(x)$ denote the exploration process from $x$ in
$G''_x$; that is, for each $t\geq1$
\[
\Gamma_t(x) = \bigl\{y\in V''\dvtx
\dist_{G''_x}(x,y)=t\bigr\}.
\]
This breadth-first-search exploration process up to some time $R$
yields a tree $\cT_x(R)$ which is stochastically dominated by a $\Bin
(0.9n,\lambda/n)$-Galton--Watson tree with $R$ levels (since $\vert
V''\vert\leq0.9n$), and as long as the number of exposed vertices
is $o(n)$ it stochastically dominates a $\Bin(7n/8,\lambda
/n)$-Galton--Watson tree (e.g.) with the same number of levels.

Reveal the graph $G'$, and pick an arbitrary vertex from each triangle
in it, denoting these vertices by $\{x_1,\ldots,x_Y\}$. Set
\[
R:= 10 \log_2 \log n,
\]
and expose $\cT_{x_i}(R)$ for all $i=1,\ldots,Y$ level by level as
described above. An important observation is that, should any of these
trees intersect, it would imply that $G$ contains a subgraph $F_\ell$
consisting of two triangles and a path of length $\ell= O(\log\log
n)$ between them. However, if $\kappa=\kappa(n)$ is any sequence going
to $\infty$ with $n$, then w.h.p. no two triangles in $G$ have distance
less than $ \log_\lambda(n)-\kappa$ between them. Indeed, the
expected number of copies of all graphs $\{ F_{\ell}\dvtx \ell\leq
\log
_\lambda(n)-\kappa\}$, where $F_\ell$ consists of two triangles and a
path of length $\ell$ edges between them, is at most
\[
\sum_{\ell\leq\log_\lambda(n)-\kappa} (np)^6 n^{\ell-1}
p^{\ell} \lesssim\sum_{\ell\leq\log_\lambda(n)-\kappa}
\frac{\lambda^{\ell}}n \lesssim\lambda^{-\kappa} = o(1).
\]
In particular, w.h.p. the $Y$ trees exposed above are pairwise disjoint.
In addition, standard large deviation estimates for the binomial distribution
(cf.~\cite{JLR}, Corollary~2.3) imply that for any given $x$
\[
\P\biggl(\biggl\vert\bigcup_{t\leq R}
\Gamma_t(x)\biggr\vert\geq\lambda^R \biggr) \leq
e^{-c (\log n)^{2}},
\]
where $c>0$ is an absolute constant. [This can be argued, e.g.,
by noting that for small enough $\delta$,
the event $\{\vert\bigcup_{t\leq R}\Gamma_t(x)\vert\geq\lambda
^R\}$ implies that for some $t\leq R$, we must have either
$\{L_t\ge L_{t-1}\mu+ \log^2 n, L_{t-1}\leq\log^2 n\}$
or
$
\{L_t\ge(1+\delta)L_{t-1}\mu, L_{t-1}\ge\log^2 n\}
$, where\vspace*{1pt} $\mu:=7\lambda/8$.]
Therefore, w.h.p. no vertex sees more than $\lambda^R = n^{o(1)}$
vertices by
time $R$, and hence we can define on the same probability space
$
(Y,\cT_{x_1}(R),\ldots,\cT_{x_Y}(R), \cT'_1(R),\ldots,\cT
'_{x_Y}(R) )
$
so that $ (\cT'_1(R),\ldots,\cT'_{x_Y}(R) )$ are
i.i.d. $\Bin(7n/8,\lambda/n)$-Galton--Watson trees with $R$ levels and
such that
$\P(\bigcap_{i=1}^Y \{ \cT'_i(R)\subset\cT_{x_i}(R) \})=1-o(1)$.

Let $\tau_L(d)$ be the probability that a Galton--Watson tree with
offspring distribution $L$ contains a $d$-regular subtree (sharing the
same root). This quantity was expressed in~\cite{PD} as a solution to
an equation involving the p.g.f. of $L$. When $L\sim\Po(\mu)$, it was
shown that $\tau_L(d)$ is the largest solution of $(1-s)\exp(\mu s) =
\sum_{j=0}^{d-1} (\mu s)^j / j!$, which is positive whenever
$d=(1-\varepsilon_\mu)\mu$ for some $\varepsilon_\mu\to0$ as $\mu
\to\infty$;
see Section~4 of that work. For $d=2$, the analysis of~\cite{PD} [and
equations~(4.3),~(4.4) in particular] shows that $\tau_L>0$ provided
$\mu> \exp(y)/y$, where $y$ is the unique positive solution to
$y^2+y+1=\exp(y)$; for example, $\mu> 3.351$ would suffice for a
positive probability of containing a binary subtree.
In case of $L\sim\Bin(n,p)$ (explicitly stated in~\cite{LP}, Section~5),
$\tau_L(d)$ is the largest solution $s\in(0,1]$ of $1-s = \P(\Bin(n,p
s) \leq d-1)$. For $p=\mu/n$, since $L\stackrel{\mathrm{d}}\to\Po
(\mu
s)$ and the intersection of the functions $(1-s)$ and $\exp(-\mu
s)(1+\mu s)$ is not a tangent point for any $\mu$ larger than the
critical one, $\tau_L(d)$ coincides with the Poisson case. Thus in our
setting indeed $\mu=7\lambda/8 \geq3.5$ (by the assumption on
$\lambda
$) suffices for the tree $\cT'_i(R)$ to contain a binary subtree of
height $R$ at its root with positive probability; let $\theta>0$ denote
this probability.

Altogether, it follows that we can define on a common
probability space our random graph and a $\Po(\lambda' \theta)$
variable $Z$
so that w.h.p. the number of triangles in~$G'$,
for which the exploration process into $V''$ from one of the endpoints
contains a
binary subtree of height $R$ rooted at that vertex, is at least $Z$.
Hence, for any fixed $k\geq1$ there will be at least $k$ such
triangles with positive probability
(here we see that $\Delta_k$ is nondegenerate: with positive
probability $G$ is triangle-free, and with positive probability we find
$k$ triangles as above, each one connected to at least $2^{\lfloor
R\rfloor} \asymp(\log n)^{10}$ vertices and thus part of $\cC_1$
w.h.p.; see, e.g., \cite{JLR}, Theorem~5.4).

The proof is completed by noticing that each of these triangles is
robust under the noise operator. Indeed, the triangle itself survives
the noise with probability $(1-\varepsilon)^3$, and henceforth the noise
operator on a binary tree is simply a branching process with offspring
distribution $\Bin(2, 1-\varepsilon)$. Letting $Z_t$ be its population
size at time $t$, a classical fact on supercritical branching processes
whose offspring distribution $L$ has a finite second moment is that, if
$m=\E L>1$ and $q<1$ is the extinction probability, for any fixed
$\delta>0$ with probability $1-q-\delta$, we have that
$\vert Z_R\vert\geq c m^R$ for some fixed $c>0$. Here we have
$m=2(1-\varepsilon)$, yielding that $\vert Z_R\vert\geq c(\log
n)^2$ for a small enough $\varepsilon$, except with probability
$q+\delta
\leq2q$ (for a suitable $\delta$) where $q$ goes to $0$ with
$\varepsilon
$. This would in turn correspond to the scenario where w.h.p. the
triangle under consideration is part of $\cC_1^\varepsilon$, the largest
component of the new graph [as the second largest component has $O_\smP
(\log n)$ vertices]. Altogether, we have shown that for $f_n=\One
_{\Delta_k}$, a positive fraction of the space $\{\omega\dvtx
f_n(\omega
)=1\}$ is such that $\P(f_n(\omega^\varepsilon)=1 \mid\omega) \geq
1-g(\varepsilon)$ where $g(\varepsilon)\to0$ as $\varepsilon\to0$. By
Proposition~\ref{proequivalentNSdefn} it then follows that $(f_n)$ is
not noise sensitive.
\end{pf*}

It remains to prove Proposition~\ref{proptriangles-C1-paths}. While it
is possible to derive the proof from various routine branching process
estimates, it will be convenient to appeal to estimates to this effect
that were developed specifically for the setting of a sparse random
graph $\cG(n,\lambda/n)$ in the recent work of Riordan and
Wormald~\cite
{RW}. Similarly to before, let $ \Gamma_t(x):= \{ v\in V(G)\dvtx
\dist
_G(x,v)=t\}$ for $t\geq0$ be the set of all vertices of $G$ at
distance exactly $t$ from $x$. Set
\[
w:=(\log n)^6,\qquad t_0 = \lfloor
\log_{\lambda_\star^{-1}} n \rfloor,\qquad t_1:= \lfloor
\log_\lambda w \rfloor,
\]
following the notation of~\cite{RW}.
Using these definitions, the following was shown in~\cite{RW}, Lemmas
2.1 and 2.2; see equations~(2.10) and~(2.11) in particular.

\begin{lemma}[(\cite{RW})]\label{lemB1}
Let $0<\kappa=o(\log n)$ be so that $\kappa\to\infty$ with $n$. Then
w.h.p. no vertex $x\in V$ satisfies
$1\leq\vert\Gamma_t(x)\vert< w$ for all $0\leq t \leq
t_0+t_1+\kappa$.
\end{lemma}

Observe that $t_1 = O(\log\log n)$ whereas $t_0 = (1+\delta_\lambda
)\lambda^{-1}\log n$ for $\delta_\lambda$ which approaches $0$ as
$\lambda$ grows. In particular, we have
\[
t_0+t_1+\kappa\leq\tfrac{1}{10}
\log_\lambda n
\]
for large enough $\lambda$ and any sufficiently large $n$.
Therefore, upon defining
\[
\tau_w(x):= \min\bigl\{ t\dvtx \bigl\vert\Gamma_t(x)
\bigr\vert\geq w\bigr\},
\]
we\vspace*{1pt} see that w.h.p. every vertex $x$ satisfies that $x\in
\cC_1$ if and
only if $\tau_w(x) \in[1,\frac{1}{10}\log_\lambda n]$.
We can now address the case $\tau_w(x)\leq\frac{1}{10}\log_\lambda n$,
which will correspond as per the discussion above to every $x$
belonging to the giant component.
%
Here we will need to adapt this conclusion to the case of two
simultaneously growing neighborhoods, as given by the next lemma.

\begin{lemma}\label{lem-B2-pairs}
Fix $\delta>0$ and take $\ell\in\mathbb{N}$ such that $\ell/\log
_\lambda n \in(1+3\delta, 2-2\delta)$. Then w.h.p. every two vertices
$x,y$ whose distance in $G$ exceeds $2\delta\log_\lambda n$ and such
that $\tau_w(x),\tau_w(y)\leq\delta\log_\lambda n$ are connected
by a
simple path of length $\ell$.
\end{lemma}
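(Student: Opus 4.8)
We would show that w.h.p.\ the asserted property holds via a union bound over the $\le n^2$ pairs $(x,y)$; accordingly, fix throughout a pair with $\tau_w(x),\tau_w(y)\le\delta\log_\lambda n$ and $\dist_G(x,y)>2\delta\log_\lambda n$, and (we may assume $\delta<\tfrac15$, as otherwise the hypothesis on $\ell$ is vacuous) set $\tau:=\max\{\tau_w(x),\tau_w(y)\}$, so $\tau\le\delta\log_\lambda n$. Writing $B_t(v):=\bigcup_{s\le t}\Gamma_s(v)$, note that $\dist_G(x,y)>2\tau$ forces $B_\tau(x)\cap B_\tau(y)=\emptyset$ and $y\notin B_\tau(x)$. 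The plan is to build a simple $x$--$y$ path out of five consecutive pieces: a geodesic of length $\tau$ from $x$ to some $s_x\in S_x:=\Gamma_\tau(x)$; a breadth-first path of length $c_1$ from $s_x$ to some $t_x$; a single edge $t_xt_y$; a breadth-first path of length $c_2$ from $t_y$ to some $s_y\in S_y:=\Gamma_\tau(y)$; and a geodesic of length $\tau$ from $s_y$ to $y$ --- where $c_1+c_2=c-1$ for $c:=\ell-2\tau$, so that the total length is $\ell$. Both hypotheses on $\ell$ are used precisely here: $\ell>(1+3\delta)\log_\lambda n$ together with $\tau\le\delta\log_\lambda n$ gives $c\ge(1+\delta)\log_\lambda n$, while $c\le\ell\le(2-2\delta)\log_\lambda n$; thus, taking $c_1,c_2=\lceil(c-1)/2\rceil,\lfloor(c-1)/2\rfloor$, we have $c_i\le(1-\delta)\log_\lambda n+O(1)$ for $i=1,2$ whereas $c_1+c_2\ge(1+\delta)\log_\lambda n-1$.

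The first step is to condition on the high-probability event $\mathcal G$, supplied by Lemma~\ref{lem:B1} together with the standard large-deviation bounds for the breadth-first exploration of $\cG(n,\lambda/n)$ (as in the proof of Proposition~\ref{prop:triangles-C1-not-NS}; cf.\ \cite{RW}*{Lemmas~2.1 and~2.2}), on which: every $v$ with $\tau_w(v)\le\delta\log_\lambda n$ satisfies $|\Gamma_t(v)|=n^{(t-\tau_w(v))/\log_\lambda n\,\pm\,o(1)}$ for all $\tau_w(v)\le t$ with $|B_t(v)|\le n^{1-\delta/2}$, and every breadth-first exploration started from a set of at least $w$ vertices multiplies its current sphere by a factor $\lambda(1\pm o(1))$ at each step as long as the explored region --- possibly made to avoid any previously exposed set of size $o(n)$ --- has size $o(n)$. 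Granting $\mathcal G$, one runs a sequential exposure: reveal $B_\tau(x)$, so that $S_x$ has $w\le|S_x|\le n^{\delta+o(1)}$ with its outward edges untouched and $|B_{\tau-1}(x)|=o(n)$; then explore from $y$ for $\tau$ steps while forbidding $B_\tau(x)$, obtaining $S_y$ with the same bounds; then run breadth-first search from $S_x$ for $c_1$ steps, avoiding everything exposed so far, obtaining $T_x:=\Gamma_{c_1}(S_x)$; and then breadth-first search from $S_y$ for $c_2$ steps, avoiding everything including $B_{c_1}(S_x)$, obtaining $T_y$. On $\mathcal G$ one gets $|T_x|\ge w\,\lambda^{(1-o(1))c_1}=n^{c_1/\log_\lambda n-o(1)}$ and, crucially, the ball stays small: $|B_{c_1}(S_x)|\lesssim|S_x|\,\lambda^{(1+o(1))c_1}\le n^{o(1)}\lambda^{(\tau-\tau_w(x))+c_1}\le n^{o(1)}\lambda^{\ell/2}\le n^{1-\delta+o(1)}=o(n)$, the dependence on $\tau$ cancelling so that this holds however large $\tau$ is; symmetrically for $T_y$, so the whole exposure remains in the regime where the estimates apply.

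It remains to join $T_x$ to $T_y$. Since $c_1+c_2=c-1\ge(1+\delta)\log_\lambda n-1$ we have $|T_x||T_y|\ge n^{(c-1)/\log_\lambda n-o(1)}\ge n^{1+\delta-o(1)}$, and the exposure can be arranged so that no edge between $T_x$ and $T_y$ has yet been probed (at each step a breadth-first search only examines edges from its present frontier into the still-untouched part of the graph); hence $G$ contains no edge between $T_x$ and $T_y$ with conditional probability at most $(1-\lambda/n)^{|T_x||T_y|}\le\exp(-n^{\delta-o(1)})$. On the complementary event, pick such an edge $t_xt_y$, let $s_x\in S_x$ (resp.\ $s_y\in S_y$) be the start of the breadth-first branch out of $S_x$ (resp.\ $S_y$) reaching $t_x$ (resp.\ $t_y$), and concatenate the five pieces. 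By the nesting of the exposed sets --- $B_\tau(x)$, $B_{c_1}(S_x)\setminus S_x$, $B_{c_2}(S_y)\setminus S_y$ and $B_\tau(y)$ are pairwise disjoint, $B_{c_1}(S_x)\cap B_\tau(x)=S_x$ with the $x$-geodesic and the $S_x$-branch meeting $S_x$ only at $s_x$, and so forth --- these five pieces are internally disjoint, so their concatenation is a simple path, of length exactly $\ell$. A union bound over the $\le n^2$ pairs, plus $\P(\mathcal G^{\mathrm c})=o(1)$, then completes the proof.

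The conceptual obstacle is the one handled in the middle paragraph. Once $\ell\asymp\log_\lambda n$, a path of length \emph{exactly} $\ell$ is necessarily far longer than the typical distance between $x$ and $y$, so it must contain a long ``bridge'' of length $c\asymp\log_\lambda n$; and the bridge cannot be a single breadth-first sphere grown from one side, since a sphere of depth $\gtrsim\log_\lambda n$ would swallow the $\Theta(n)$ vertices of the giant component --- hence the bridge must be split into two halves of depth $\approx c/2$. The two hypotheses on $\ell$ are calibrated exactly to make this work: $\ell<(2-2\delta)\log_\lambda n$ guarantees $c/2\le(1-\delta)\log_\lambda n$, keeping each half within the geometric-growth, $o(n)$-ball regime to which the estimates of \cite{RW} apply, while $\ell>(1+3\delta)\log_\lambda n$ guarantees $c\ge(1+\delta)\log_\lambda n$, so that the two halves grow past $n^{1/2}$ in size and are therefore joined by an edge with probability $1-\exp(-n^{\Omega(1)})$, which survives the union bound. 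The remaining work --- making the growth statement $\mathcal G$ uniform over all relevant vertices with the needed $o(1)$ error, and keeping the nested explorations honest, in particular that the $T_x$--$T_y$ edges are genuinely fresh --- is routine but fiddly, and is precisely what the estimates of \cite{RW} are set up to handle.
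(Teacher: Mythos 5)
Your proposal is correct and follows essentially the same route as the paper's proof: grow exploration neighborhoods from $x$ and $y$ to complementary depths so that the frontiers become polynomially large, observe that the edges joining the two frontiers have not yet been examined, and conclude that with probability $1-\exp(-n^{\Omega(1)})$ (enough for a union bound over all pairs) at least one such edge is present. The paper runs a single alternating exploration to depths $T=\delta\log_\lambda n$ and then onward to $\lceil\ell/2\rceil$, $\lfloor\ell/2\rfloor$; you instead anchor the split at $\tau=\max\{\tau_w(x),\tau_w(y)\}$ and carry out the exposure sequentially in five explicitly disjoint pieces, which makes the ``the exposed region stays $o(n)$'' bookkeeping and the simplicity of the final path a bit more transparent, and incidentally gets the path length to come out to exactly $\ell$ (your $\tau+c_1+1+c_2+\tau=\ell$ arithmetic is clean, whereas the paper's frontiers at depths $\lceil\ell/2\rceil$ and $\lfloor\ell/2\rfloor$ joined by an edge technically give length $\ell+1$ --- a harmless off-by-one that would be fixed by replacing $\ell$ with $\ell-1$ there). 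The calibration you give of the two constraints on $\ell$ --- the upper bound $\ell<(2-2\delta)\log_\lambda n$ keeping each half-exploration in the sub-$o(n)$ regime, and the lower bound $\ell>(1+3\delta)\log_\lambda n$ forcing $|T_x||T_y|\ge n^{1+\delta-o(1)}$ so the connecting edge appears with super-polynomially small failure probability --- matches the paper's use of these hypotheses exactly.
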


\begin{pf}
Set $T=\delta\log_\lambda n$, and consider the standard exploration
process which iteratively reveals $\Gamma_t(x)$ for $1\leq t \leq T$.
Estimating $\vert\Gamma_t(x)\vert$ is elementary by standard
concentration arguments, as noted in~\cite{RW}, Lemma~2.4. Indeed,
denoting $L_t = \vert\Gamma_t(x)\vert$ for the number of
vertices at distance $t$ from $x$, clearly
$L_{t+1} \sim\Bin(n-\sum_{i\leq t}L_i, q)$ for $q=1-(1-\lambda
/n)^{L_t} = \lambda L_t / n + O(L_t^2 / n^2)$. It then follows from
large deviation estimates of the binomial variable (as\vspace*{1pt}
used in the
proof of Proposition~\ref{proptriangles-C1-not-NS}) that as long as,
for example, $\sum_{i \leq t} L_i \leq n^{1-\delta/2}$,
\[
\P\biggl(\biggl\vert\frac{L_{t+1}}{\lambda L_t} - 1\biggr\vert \geq\frac
{1}{\log^2 n}
\Big| L_t \biggr) \leq2\exp\biggl(-\biggl(
\frac{1}3-o(1)\biggr)\frac{\lambda L_t}{\log^4 n} \biggr),
\]
where the assumption on $L_t$ makes $\E[L_{t+1} \mid L_t] =
(1+O(n^{-\delta/2}))\lambda L_t$, an approximation error which is
insignificant compared to the $O(1/\log^2 n)$ scale of the deviation
considered here. In particular, we see that necessarily
\[
w \leq L_{\tau_w(x)} \leq2 \lambda w
\]
except with probability $\exp(-c w / \log^4 n)=\exp(-c \log^2 n)$ for
an\vspace*{1pt} absolute constant $c>0$.
Furthermore, by accumulating the $O(1/\log^2 n)$ errors up to time $T
= O(\log n)$, this estimate can be extended throughout this interval
[note that since $T = \delta\log_\lambda n$ this will maintain $L_t
\leq n^\delta$ satisfying the requirement on the size of $\sum_{i\leq
t}\vert\Gamma_i(x)\vert$ with room to spare] to yield
\[
\bigl\vert L_t/ \bigl[\lambda^{t-\tau_w(x)} L_{\tau_w(x)}
\bigr] - 1\bigr\vert\leq\frac{\log\log n}{\log n}\qquad\mbox
{for all }
\tau_w \leq t\leq T
\]
except with probability $\exp(-c \log^2 n)$ for some other absolute
$c>0$ [the factor of $\log\log n$ could have been replaced by any
$\kappa(n)$ going to $\infty$ with $n$].

Now, let us adapt the exploration process to a pair of initial points
$x,y$ as follows.
Denoting the set of neighbors of a set $S$ in $G$ by $N_G(S)$, let
\begin{eqnarray*}
\Gamma'_0 &=& \{x\},\qquad\Gamma'_t=
N_G \bigl(\Gamma'_{t-1} \bigr) \Bigm\backslash
\bigcup_{i<t} \bigl(\Gamma'_i
\cup\Gamma''_i \bigr),
\\
\Gamma''_0 &=& \{y\},\qquad
\Gamma''_t= N_G \bigl(
\Gamma''_{t-1} \bigr) \Bigm\backslash\biggl(
\Gamma'_t \cup\bigcup_{i<t}
\bigl(\Gamma'_i \cup\Gamma''_i
\bigr) \biggr).
\end{eqnarray*}
That is, we expand the neighborhood of $x$ among unvisited vertices
(those that had not yet appeared in any of the neighborhoods) followed
by the same procedure for~$y$, repeatedly.

We clearly have that $\bigcup_{t\leq T}\Gamma'_t$ and $\bigcup
_{t\leq
T}\Gamma''_t$ are disjoint by construction. The hypothesis on the
distance of $x,y$ then implies that $\Gamma'_t=\Gamma_t(x)$ and
$\Gamma
''_t=\Gamma_t(y)$ for all $t\leq T$.
It now follows that
$ \sum_{t\leq T}(\vert\Gamma'_t\vert+\vert\Gamma
''_t\vert) \leq5\lambda w n^{\delta}$
with probability $1-\exp(-c\log^2 n)$ for some absolute $c>0$.

Exposing $\Lambda'_t$ for $t=T+1,\ldots,\lceil\ell/2\rceil$
alternating with exposing $\Lambda''_t$ for $t=T+1,\ldots,\lfloor
\ell
/2\rfloor$,
the exact same concentration argument as above---while recalling that
$\ell<(2-2\delta)\log_\lambda n$ by hypothesis and so at all times
above there are at least $(1-O(n^{-\delta}))n$ unexposed
vertices---implies that with probability $1-\exp(-c\log^2 n)$ for some
absolute $c>0$, we have
\begin{eqnarray*}
\bigl\vert\bigl\vert\Gamma'_t\bigr\vert/
\bigl(\lambda^{t-T} \bigl\vert\Gamma'_{T}
\bigr\vert\bigr) - 1\bigr\vert&\leq&\frac{\log\log n}{\log
n}\qquad\mbox{for all }T
\leq t\leq\lceil\ell/2\rceil,
\\
\bigl\vert\bigl\vert\Gamma''_t
\bigr\vert/ \bigl(\lambda^{t-T} \bigl\vert\Gamma''_{T}
\bigr\vert\bigr) - 1\bigr\vert&\leq&\frac{\log\log n}{\log
n}\qquad\mbox{for all }T
\leq t\leq\lfloor\ell/2\rfloor.
\end{eqnarray*}
Combining this with the fact that $\vert\Gamma'_T\vert,\vert
\Gamma''_T\vert\geq w$ along with the hypothesis $\ell> (1+3\delta
)\log_\lambda n$ now yields that with the aforementioned probability,
\[
\bigl\vert\Gamma'_{\lceil\ell/2\rceil}\bigr\vert\geq
n^{(1+\delta)/2}\quad\mbox{and}\quad\bigl\vert\Gamma''_{\lfloor
\ell
/2\rfloor}
\bigr\vert\geq n^{(1+\delta)/2}.
\]
Finally, observe that none of the potential edges between $\Gamma
'_{\lceil\ell/2\rceil}$ and $\Gamma''_{\lfloor\ell/2\rfloor}$
has been
examined yet, and the probability that none belong to $G$ is
at most
\[
(1-\lambda/n)^{\vert\Gamma'_{\lceil\ell/2\rceil}\vert\vert
\Gamma''_{\lfloor\ell/2\rfloor}\vert} \leq\exp\bigl(-\lambda
n^\delta\bigr).
\]
As any such edge yields a simple path of length $\ell$ between $x,y$,
the proof of the lemma is concluded by a union bound over $x,y$, easily
accommodated by the fact that all error probabilities were
super-polynomially small in $n$.
\end{pf}

With the above ingredients, we can establish Proposition~\ref
{proptriangles-C1-paths} guaranteeing length-specific paths between
triangles in the giant component $\cC_1$.
\begin{pf*}{Proof of Proposition~\ref{proptriangles-C1-paths}}
Since $\cC_1$ is of linear size w.h.p., and thanks to Lemma~\ref
{lemB1} and the discussion following it, w.h.p. every vertex $x\in\cC
_1$ satisfies $\tau_w(x) < \frac{1}{10}\log_\lambda n$. Choosing
$\delta
=\frac{1}{10}$ and $\ell=r_n$ in Lemma~\ref{lem-B2-pairs} we obtain
that w.h.p. every two vertices $x,y\in\cC_1$ with $\dist
_G(x,y)>\frac
{1}{5}\log_\lambda n$ have a simple path connecting them of distance
precisely $r_n=\lfloor\frac{3}2 \log_\lambda n\rfloor$.

The first statement of the proposition now follows from the fact noted
in the proof of Proposition~\ref{proptriangles-C1-not-NS} that for any
$\kappa=\kappa(n)$ going to $\infty$ with $n$, w.h.p. no two triangles
in $G$ have distance less than $ \log_\lambda(n)-\kappa$ between them.
In particular, w.h.p. every pair of triangles in $\cC_1$ has distance
at least $\frac{1}2\log_\lambda n$, and thus are connected by a path of
length $r_n$, as argued above.

Finally, it is well known (see, e.g.,~\cite{JLR}, Theorem 5.12) that
w.h.p. $\cC_1$ is the only component that contains more than a single
cycle, and therefore $\P(H_n \subset G) = \P(H_n\subset\cC_1)+o(1)
\leq\P(\Delta_2)+o(1)$. As we have shown above that $\P(\Delta_2)
\leq
\P(H_n\subset\cC_1) + o(1)$, this completes the proof.
\end{pf*}

Propositions~\ref{proptriangles-C1-not-NS} and~\ref
{proptriangles-C1-paths} combined complete the proof of Theorem~\ref
{thm-strictly-balanced}.\vadjust{\goodbreak}

\section{General properties of strong noise sensitivity}\label{sec0-vs-1}

\subsection{0-strong versus 1-strong noise sensitivity}

The following proposition gives a simple and yet useful necessary
condition for \SNSv.

%
\begin{lemma}\label{lemboundedwitness}
Let $(f_n)$ be a sequence of monotone Boolean functions, and let
$Y_n(\omega) = \sum_{W\in\cW_0(f_n)} \One_{\{\omega_{W}\equiv0\}}$
count\vspace*{1pt} the occurring 0-witnesses in $\omega\in\Omega
_n$. If $\sup_n \E
[Y_n]<\infty$,
then the sequence is not \SNSv.
\end{lemma}

\begin{pf}
Clearly if $W\in\cW_1$ and $W'\in\cW_0$, we must have
$W\cap W'\neq\varnothing$, whence
\[
\P\bigl(\omega^{\varepsilon}_{W'}\equiv0 \mid
\omega_W\equiv1 \bigr)\leq\varepsilon\P\bigl(\omega
^{\varepsilon}_{W'}
\equiv0 \bigr),
\]
and so, by our main assumption, there exists some $C>0$ such that for
all $n$
\[
\sup_{W\in\cW_1} \E\bigl[Y_n\bigl(
\omega^\varepsilon\bigr)\mid\omega_W \equiv1 \bigr]\le C
\varepsilon.
\]
It follows that
\[
\inf_{W\in\cW_1}\P\bigl(f_n\bigl(
\omega^{\varepsilon}\bigr)=1\mid\omega_W\equiv1 \bigr)\ge1-O(
\varepsilon),
\]
and thus the sequence is not \SNSv~(instead, the conditional probability
given any 1-witness is in some sense noise stable, going to 1 as
$\varepsilon\to0$).
\end{pf}

\begin{remark*}
The converse of Lemma~\ref{lemboundedwitness} is false, as the
recursive 3-majority function
demonstrates. We have shown in Section~\ref{subsecstrnoise} that this
function is not \SNSv,
and yet it is easy to see that $\E[ Y_n]$ is not uniformly bounded
(nor is the expected number of 1-witnesses, by symmetry).\vspace*{1pt}
Indeed, if $a_k$ denotes the number of
0-witnesses when there are $n=3^k$ variables, then $a_0=1$ and
$a_{k+1}=3a_k^2$, and so in general $a_k=3^{2^k-1}$. Since a canonical
witness has size $2^k$, we have
$\E Y_n = \frac{1}3(3/2)^{2^k} \to\infty$.
\end{remark*}

Many of the examples that we have seen are \SNSv\ but not \SNSn\ or
vice versa.
We next show that there are Boolean functions which are both.

%
\begin{theorem} \label{thm1SNSand0SNS}
There exists a sequence of monotone nondegenerate\break Boolean functions
which are both
\SNSv\ and \SNSn.
\end{theorem}

\begin{pf}
Define the following Boolean functions:
\begin{itemize}
\item$g_n$: the tribes function on $n$ bits with $\lfloor\log_2
(\frac
{n}{\log_2n})\rfloor$-bit blocks (as usual,
potentially ignoring one shorter block to remedy divisibility issues).
\item$h_n$: the tribes function on $m_n:=\lfloor n^{\log n}\rfloor$ bits
with $b_n:=\lfloor\log_2 (\frac{m_n}{\log_2m_n})\rfloor$ bits\vspace*{1pt} per
block and reversed 0/1 roles
($h_n =0$ if and only if there is an all-0 block).
\item$f_n = g_n\circ h_n$ is the composition of these functions acting
on $m_n n$ bits
(applying $h_n$ to the first $m_n$ bits, the next $m_n$ bits, etc.,
then feeding the $n$ output bits into~$g_n$),
which we claim is both \SNSv\ and \SNSn.
\end{itemize}
Let $p_n$ be such that $\P(h_n=1)=1/2$ [it is
easy to see
that $p_n = 1/2+o(1)$].
The proof will follow from two straightforward properties of $h_n$.

First, we claim that for any $\varepsilon>0$, there exists $\delta>0$
so that
%
%
\begin{equation}
\label{eqBehaviorOfTribes} \inf_n\inf_{W \in\cW_1(h_n)}\P
\bigl(h_n\bigl(\omega^{\varepsilon}\bigr)=0\mid
\omega_W\equiv1 \bigr)\ge\delta.
\end{equation}
Indeed, the number of 0-witnesses occurring in
$\omega^{\varepsilon}$ given $\omega_W\equiv1$ is binomial with parameters
$\Bin((1+o(1))\frac{m_n}{\log_2 m_n}, \varepsilon p^{b_n})$. Since
$p^{b_n} \asymp\frac{\log_2 m_n}{m_n}$,
for fixed $\varepsilon$ this converges to a nontrivial
Poisson distribution, from which~\eqref{eqBehaviorOfTribes} follows.

Second, we argue that for any $\varepsilon> 0$ we have
%
%
\begin{equation}
\label{eqBehaviorOfTribesAGAIN} \max_{W\in\cW_0(h_n)}\P\bigl
(h_n\bigl(
\omega^{\varepsilon}\bigr)=0\mid\omega_W\equiv0 \bigr) -\P
(h_n=0 )=o (1/n ).
\end{equation}
To see this, note that since the 0-witnesses for $h_n$ are disjoint,
the only gain from conditioning on the event
$\omega_W\equiv0$ for some 0-witness $W$ is that the probability that
$\omega^\varepsilon_W\equiv0 $ is increased. Therefore, it suffices to
show that
$
\P(\omega^{\varepsilon}_W\equiv0 \mid\omega_W\equiv0 )=o (1/n )
$ uniformly over $W$.
Indeed this holds as $\P(\omega^{\varepsilon}_W\equiv0 \mid\omega
_W\equiv
0 ) = (1-\varepsilon p_n)^{b_n}$ with $p_n\sim1/2$ and $b_n \gtrsim
\log
m_n \gtrsim\log^2 n$,
thus establishing~\eqref{eqBehaviorOfTribesAGAIN} (with room to spare).

To show that $(f_n)$ is \SNSv, fix $\varepsilon>0$ and note that a
1-witness $W$ for $f_n$ is
obtained by taking a 1-witness $W'$ for $g_n$ and for each $x\in W'$
taking a 1-witness $W''_x$ for $h_n$. By~\eqref{eqBehaviorOfTribes},
$\P(\omega^\varepsilon_x=0\mid\omega_{W''_x}=1)\geq\delta$
for any $x\in W'$ with $\delta(\varepsilon)>0$ fixed.
Thus $\P(\omega_{W'}\equiv1) \leq(1-\delta)^{\vert W'\vert}
\to0$, and since the rest of the blocks of $g_n$ are independent, we get
[following the same argument used to show~\eqref
{eqBehaviorOfTribesAGAIN} above]
that $(f_n)$ is \SNSv.

It remains to show that $(f_n)$ is \SNSn. Fix $\varepsilon>0$, and again
take a 0-witness $W$
for $f_n$ in the form of a 0-witness $W'$ for $g_n$ and accompanying
each $x\in W'$ by a $0$-witness $W''_x$
for $h_n$.
If $\omega_{W}\equiv0$, then~\eqref{eqBehaviorOfTribesAGAIN}
and the fact that $\vert W'\vert\asymp\frac{n}{\log n}$ tell us
that $\omega^\varepsilon_{W'}$
has a distribution whose total variation distance from an i.i.d.
sequence with parameter $1/2$ goes to 0.
With the other blocks of $g_n$ independent, as before this implies that
$(f_n)$ is \SNSn.
\end{pf}

\subsection{Different levels of noise in strong noise sensitivity}

An interesting fact about noise sensitivity, pointed out
in~Section~\ref
{secprelim}, is that
if the criterion~\eqref{eqnNS} for $\NS$ holds for one fixed
$\varepsilon
\in(0,1)$, then it holds for all such $\varepsilon$.
It is then natural to ask whether strong noise sensitivity also
exhibits this behavior. Clearly, if the criterion~\eqref{eqnSNS} for
\SNSv\ holds for one $\varepsilon\in(0,1)$, then it holds
for all $\varepsilon'>\varepsilon$ by monotonicity. However, the next theorem
tells us that in fact~\eqref{eqnSNS} may hold for some $\varepsilon
\in
(0,1)$ and not for some other $\varepsilon'\in(0,\varepsilon)$.

%
\begin{theorem} \label{1SNSDependsOnEpsilon}
There exists a sequence of monotone Boolean functions $(f_n)$
which is \SNSv\ w.r.t. any fixed $\frac{1}4<\varepsilon<1$, while for any
fixed $0<\varepsilon<\frac{1}5$
\[
\lim_{n\to\infty}\inf_{W \in\cW_1(f_n)} \P
\bigl(f_n\bigl(\omega^{\varepsilon}\bigr)=1\mid
\omega_W\equiv1 \bigr)=1.
\]
\end{theorem}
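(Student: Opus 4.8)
The plan is to produce such a sequence as a three‑level composition $f_n=T_{N_n}\circ R_{k_n}\circ Q$ and to extract the two regimes from a one‑dimensional iteration. Here $Q$ is a fixed bounded‑size monotone gadget applied at the leaves, $R_{k_n}$ is the $k_n$‑fold recursive application of a suitably chosen monotone gate $G$, and $T_{N_n}$ is a tribes function on $N_n$ bits with blocks of size $\asymp\log N_n$. The underlying bias $p$ is fixed once and for all so that the output of $Q$ sits at a fixed point $q^\star\in(0,1)$ of the iteration map $\beta_G$ of $G$; then $\P(R_{k_n}=1)=q^\star$ for every $k$, $\P(f_n=1)=\P(T_{N_n}(\mathrm{Bern}(q^\star)^{N_n})=1)$ is bounded in $(0,1)$, and $f_n$ is monotone. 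All three layers are witness‑transitive, so it suffices to control one canonical $1$‑witness $W$: a full $T_{N_n}$‑block $W'$ together with, on each of the $s\asymp\log N_n$ corresponding copies, a $1$‑witness for $R_{k_n}\circ Q$. Conditioning on $\omega_W\equiv1$ and propagating the noise exactly as in the recursive‑majority example of \S\ref{subsec:strnoise}, the copies indexed by $W'$ output, after the noise, i.i.d.\ $\mathrm{Bern}(\zeta^\epsilon_{k_n})$, where $\zeta^\epsilon_0$ is the noised witness‑conditioned value of $Q$ and $\zeta^\epsilon_j=h(\zeta^\epsilon_{j-1})$ with $h$ the transfer map of $G$ when its ``witness side'' inputs are at level $\zeta^\epsilon_{j-1}$ and the remaining inputs at the background level $q^\star$ (fresh subtrees always sit at $q^\star$ since $\beta_G(q^\star)=q^\star$); the other $T_{N_n}$‑inputs stay fresh i.i.d.\ $\mathrm{Bern}(q^\star)$ and independent of these. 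Hence $\P(f_n(\omega^\epsilon)=1\mid\omega_W\equiv1)=\P(T_{N_n}(Y)=1)$ with $Y$ equal to $\mathrm{Bern}(\zeta^\epsilon_{k_n})$ on $W'$ and $\mathrm{Bern}(q^\star)$ elsewhere.

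The second step is the dynamics of $h$. I would arrange that both $q^\star$ and $1$ are \emph{attracting} fixed points of $h$ with a single \emph{repelling} fixed point $x^\star$ in between, $q^\star<x^\star<1$; then $\zeta^\epsilon_{k_n}\to1$ if $\zeta^\epsilon_0>x^\star$ and $\zeta^\epsilon_{k_n}\to q^\star$ if $q^\star\le\zeta^\epsilon_0<x^\star$. (By FKG one always has $\zeta^\epsilon_0\ge q^\star$, and $\zeta^\epsilon_0$ is continuous and strictly decreasing in $\epsilon$, equal to $1$ at $\epsilon=0$ and to $\P(Q=1)=q^\star$ at $\epsilon=1$.) The threshold is the solution $\epsilon^\star$ of $\zeta^\epsilon_0=x^\star$, and the point of the construction is to choose $G$ and $Q$ so that $\epsilon^\star\in(\tfrac15,\tfrac14)$; this is a finite computation once the parameters are fixed, and it is where the two constants in the statement come from.

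Granting this, the two conclusions come from the tribes layer. For $\epsilon>\tfrac14$ one has $\zeta^\epsilon_{k_n}\to q^\star$, so the block $W'$ becomes indistinguishable from a fresh block: writing $s$ for the block size,
\[
\big|\P(T_{N_n}(Y)=1)-\P(f_n=1)\big|=\big|(\zeta^\epsilon_{k_n})^{s}-(q^\star)^{s}\big|\,(1-(q^\star)^{s})^{N_n/s-1}\,,
\]
and since $\zeta^\epsilon_{k_n}\to q^\star<1$ both powers are $o(1)$; with the FKG lower bound $\P(f_n(\omega^\epsilon)=1\mid\omega_W\equiv1)\ge\P(f_n=1)$ this gives \SNS\ w.r.t.\ $\epsilon$. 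For $\epsilon<\tfrac15$ one has $\zeta^\epsilon_{k_n}\to1$, and keeping only $W'$ gives $\P(T_{N_n}(Y)=1)\ge(\zeta^\epsilon_{k_n})^{s}\ge1-s(1-\zeta^\epsilon_{k_n})$; since $h$ contracts toward $1$ at a fixed geometric rate one has $1-\zeta^\epsilon_{k_n}\le C(\epsilon)r^{k_n}$ for some $r<1$, so it is enough that $r^{k_n}\log N_n\to0$, which leaves vast freedom in choosing $k_n$ versus $N_n$ (e.g.\ $N_n$ equal to the number of leaves per copy). Thus $\inf_{W\in\cW_1}\P(f_n(\omega^\epsilon)=1\mid\omega_W\equiv1)\to1$, as required. (Note that this argument crucially uses $x^\star>q^\star$: because FKG forces $\zeta^\epsilon_0\ge q^\star$, a recursion whose relevant repelling fixed point equals the calibration level — as happens for plain recursive majority — would give $\zeta^\epsilon_{k_n}\to1$ for \emph{every} $\epsilon<1$ and no threshold at all.)

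The genuinely delicate point, and essentially the only content beyond bookkeeping, is exhibiting the gate $G$: its iteration map must have \emph{two} attracting fixed points with a repelling one strictly between, i.e.\ be ``double‑$S$ shaped'' — which no ordinary threshold gate can be, so $G$ must be taken non‑symmetric, one natural choice being a gate whose weight‑$k$ accepting fraction is $0$ for small $k$, strictly between $0$ and $1$ for an intermediate band of $k$, and $1$ for large $k$ (for instance ``$|\vec z|\ge t_2$, or $|\vec z|\ge t_1$ together with a designated coordinate''), possibly built as a small two‑level formula. One must then verify that the induced witness‑conditioned transfer map $h$ really does inherit the three‑fixed‑point structure with $x^\star\in(q^\star,1)$ (and handle the fact that such a gate may have two inequivalent witness shapes, so the ``$\max_W$'' in \SNS\ must be taken over both), and finally tune $x^\star$ and the gadget $Q$ so that $\epsilon^\star$ lies strictly inside $(\tfrac15,\tfrac14)$. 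Everything else — the reduction to the recursion and the two tribes estimates — parallels the arguments already given for the tribes and recursive‑majority examples and for Proposition~\ref{prop:poisson-WD}.
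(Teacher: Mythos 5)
Your construction and the paper's are genuinely different, and the difference matters. The paper takes $f_n = r_n\circ g_n$ with the \emph{recursion on the outside} (recursive $5$-majority of depth $\lfloor 1.01\,b_n\rfloor$) and tribes with block length $b_n$ on the inside; there is no leaf gadget. Conditioning a $1$-witness for the inner tribes shifts the input to the recursion from $1/2$ to $1/2+\Gamma_n$ where $\Gamma_n\asymp(1-\epsilon/2\pm\epsilon^2/16)^{b_n}$, and the threshold comes from \emph{racing} the geometric decay of this perturbation (rate $(0.88)^{b_n}$ vs.\ $(0.89)^{b_n}$ according to whether $\epsilon>1/4$ or $\epsilon<1/5$) against the geometric repulsion $(9/8)^{1.01\,b_n}$ at the single repelling fixed point $1/2$ of the map $h$ from Claim~\ref{clm:5Majority}. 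The crucial and subtle point the paper exploits, and which you miss, is that the recursion depth is \emph{coordinated} with the tribes block size: $1.01\,b_n$ iterations applied to a perturbation of size roughly $c^{b_n}$ give a clean dichotomy because $(9/8)^{1.01}\cdot 0.88<1<(9/8)^{1.01}\cdot 0.89$. Your parenthetical claim that ``a recursion whose relevant repelling fixed point equals the calibration level --- as happens for plain recursive majority --- would give $\zeta^\epsilon_{k_n}\to1$ for \emph{every} $\epsilon<1$'' is therefore wrong: it holds only if the depth $k_n$ is taken large relative to the initial perturbation, which is exactly what the paper avoids.

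Because you reject the calibrated-depth route, your plan needs a gate $G$ whose iteration map is ``double-$S$ shaped'' with two attracting fixed points $q^\star<1$ and a repelling $x^\star\in(q^\star,1)$, compatible with the witness-conditioned transfer map $h$, and with parameters tunable so that $\epsilon^\star\in(\tfrac15,\tfrac14)$. You identify this yourself as ``essentially the only content beyond bookkeeping,'' but you do not exhibit $G$, verify the three-fixed-point structure of the induced $h$, handle the multiple inequivalent witness shapes of a non-symmetric gate, or perform the tuning that makes the constants $\tfrac14$ and $\tfrac15$ literally correct. As it stands the proposal is an outline for a plausible alternative construction, not a proof: the load-bearing object is asserted rather than constructed, and the observation that would have made the paper's simpler construction available to you is dismissed. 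If you wanted to pursue your route you would also need to reverse the order of quantification carefully --- placing tribes on the outside means the $\max_W$ in the definition of \SNS\ ranges over a product of $R_{k_n}\circ Q$-witnesses, so the uniformity of your fixed-point argument over all those witness shapes must be checked, whereas the paper's outer recursion keeps this bookkeeping minimal.
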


\begin{pf}
Define the following Boolean functions:
\begin{itemize}
\item$r_n$: recursive $5$-majority on $5^{\lfloor1.01 b_n\rfloor}$
variables where $b_n:=\lfloor\log_2 (\frac{n}{\log_2 n})\rfloor$.
\item$g_n$: the tribes function on $n$ bits with $b_n$-bit blocks.
%
\item$f_n = r_n\circ g_n$ is the composition of these two functions,
acting on $n 5^{\lfloor1.01 b_n\rfloor}$ bits,
which we claim will have the desired properties.
\end{itemize}
Choose $p_n$ such $\P(g_n = 1)=1/2$ [recall that this choice has
$p_n=1/2+o(1)$].
In Claim~\ref{clm5Majority} we related the probability that a witness
for $r_n$ survives the noise
to the $k$-iterated function $h(x)$ from that claim, denoted here $h^{(k)}(x)$.
The next claim establishes two simple features of that function.

%
\begin{lemma} \label{BehaviorOfh}
Let $h(x):=-\frac{1}{2}x^3+\frac{3}{4}x^2+\frac{3}{4}x$ as in~\eqref
{eq-hx-def}.
Then we have $h^{(1.01 m)} (\frac{1}{2}+(0.88)^m ) = \frac{1}2 +
o(1)$ whereas
$h^{(1.01 m)} (\frac{1}{2}+(0.89)^m )=1-o(1)$.
\end{lemma}

\begin{pf}
Letting\vspace*{1pt} $L$ be the linear function
$L(x):=\frac{9}{8}(x-\frac{1}{2})+\frac{1}{2}$, we have $h \leq L$ on
$[\frac{1}2,1]$ since $h$ is concave in that interval and has $h(\frac
{1}2)=\frac{1}2$ and $h'(\frac{1}{2})=\frac{9}{8}$. Since $h$ is
increasing and sends
$[\frac{1}{2},1]$ to itself, it\vspace*{1pt} follows that $h^{(k)}\le
L^{(k)}$ on
$[\frac{1}{2},1]$ for all~$k$.
Observing that $L^{(k)}(x)=(\frac{9}{8})^k(x-\frac{1}{2})+\frac{1}{2}$,
in particular we have
$h^{(1.01 m)} (\frac{1}{2}+(0.88)^m )-\frac{1}2 \leq(\frac
{9}{8})^{1.01 m}(0.88)^{m}\to0$ as $m\to\infty$.

For the second statement, choose $p_0\in(\frac{1}{2},1)$ so that
$h'(p_0)=\frac{9}{8}-\frac{1}{1000}$. Since $h$ is concave on $[\frac
{1}{2},1]$,
now $h\ge M$ on $[\frac{1}{2},p_0]$ where $M$ is the linear function
$M(x):=h'(p_0)(x-\frac{1}{2})+\frac{1}{2}$.
Since $h$ is increasing and sends $[\frac{1}{2},1]$ to itself,
$h^{(k)}(x)\ge M^{(k)}(x)$ for all $x$ and $k$ satisfying
$M^{(k-1)}(x)\le p_0$ (i.e., until the orbit of $x$ passes $p_0$).
Since $M^{(m)}(x)=(h'(p_0))^m(x-\frac{1}{2})+\frac{1}{2}$, we have
$M^{(m)}(\frac{1}{2}+(0.89)^m) \to\infty$, and so
$h^{(m)}(\frac{1}{2}+(0.89)^m)\ge p_0$ for\vspace*{1pt} large $m$.
Since $p_0$ is a
fixed number larger than $1/2$, and $h(x)$ has fixed points at $\{
0,1/2,1\}$, the additional $m/100$ iterations give $h^{(1.01
m)}(x)=1-o(1)$, as required.
\end{pf}
As for the tribes function $g_n$, it is easy to check that for any
1-witness $W$,
\[
\Gamma_n:= \P\bigl(g_n\bigl(\omega^{\varepsilon}
\bigr)=1\mid\omega_W\equiv1 \bigr)-\P(g_n=1)=
u_n \bigl[ \bigl(1-\varepsilon(1-p_n)
\bigr)^{b_n}-p_n^{b_n} \bigr],
\]
where $u_n$ is the probability that none of the blocks except possibly
the first one is an all 1-block,
which is $1/2+o(1)$. As $p_n= 1/2+o(1)$, it follows, say, that for any
fixed $0<\varepsilon<1$, any sufficiently large $n$ and any
$1$-witness $W$,
%
%
\begin{equation}
\bigl(1-\varepsilon/2-\varepsilon^2/16 \bigr)^{b_n} \leq
\Gamma_n \leq\bigl(1-\varepsilon/2+\varepsilon^2/16
\bigr)^{b_n}.\label{eq-1SNSQuantitativeTribes}
\end{equation}
Any 1-witness $W$ for $f_n$ is obtained by taking some 1-witness $W'$
for $r_n$
together with a 1-witness $W''_x$ for $g_n$ for every $x\in W'$.
By~\eqref{eq-1SNSQuantitativeTribes}, for large enough $n$ the
distribution of the bits $\omega^\varepsilon_{W'}$ is i.i.d. with
probability $q_n$ of $1$, where $q_n \leq1/2 + (0.88)^{b_n}$ if
$\varepsilon>\frac{1}4$, whereas $q_n \geq\frac{1}2 + (0.89)^{b_n}$ if
$\varepsilon<\frac{1}5$.

Finally, the analysis in Claim~\ref{clm5Majority} tells us that for
recursive 5-majority with $k$ levels on an input distribution that is
i.i.d. $(q,1-q)$ for $q\neq1/2$ on a 1-witness $W'$ and i.i.d.
$(1/2,1/2)$ elsewhere, the
probability that the output is 1 is $h^{(k)}(q)$. This fact together with
Lemma~\ref{BehaviorOfh} completes the proof.
\end{pf}

\section*{Acknowledgments}
This work was carried out when Jeffrey E. Steif
was visiting Microsoft Research at Redmond, and he thanks
the Theory Group for its hospitality and for creating a stimulating
research environment.
We thank the anonymous referees for useful comments.




%

\printaddresses
\end{document}